\documentclass[hidelinks,11pt]{article}
\usepackage{amsmath,amssymb,amsfonts,amsthm,mathrsfs,relsize,mathtools,graphicx,float,hyperref,yhmath,stmaryrd,url,multicol,authblk,fullpage,bbm,euscript,enumitem,accsupp}
\usepackage[labelsep=period,font=footnotesize,labelfont=bf]{caption}
\usepackage[title]{appendix}
\usepackage{titlesec}
\titlelabel{\thetitle.\,\,}
\allowdisplaybreaks
\usepackage{pgf,tikz}
\usetikzlibrary{arrows}
\usetikzlibrary[patterns]
\setlist[itemize]{noitemsep,topsep=0pt}


\let\svthefootnote\thefootnote
\textheight 1in
\newcommand\blankfootnote[1]{%
\let\thefootnote\relax\footnotetext{#1}%
\let\thefootnote\svthefootnote%
}


\textwidth = 16.4 cm
\textheight = 21.8 cm
\oddsidemargin = 0  cm
\evensidemargin = 0 cm
\topmargin = 0.6  cm
\parskip = 1 mm


\theoremstyle{plain}
\newtheorem{theorem}{Theorem}[section]
\newtheorem{lemma}[theorem]{Lemma}
\newtheorem{corollary}[theorem]{Corollary}
\newtheorem{observation}[theorem]{Observation}

\theoremstyle{definition}
\newtheorem{case}[theorem]{Case}

\newcommand{\da}{\rotatebox[origin=c]{45}{$\Box$}}


\DeclareMathAlphabet{\mathbbmsl}{U}{bbm}{m}{sl}

\DeclareMathAlphabet{\mathpzc}{OT1}{pzc}{m}{it}
\DeclareMathAlphabet{\mathsfit}{T1}{\sfdefault}{\mddefault}{\sldefault}\SetMathAlphabet{\mathsfit}{bold}{T1}{\sfdefault}{\bfdefault}{\sldefault}


\providecommand*{\napprox}{%
\BeginAccSupp{method=hex,unicode,ActualText=2249}%
\not\approx
\EndAccSupp{}%
}


\begin{document}

\title{\bf{The weak saturation number of  $\boldsymbol{K_{2, t}}$}\\ \vspace{9mm}}

\author{
Meysam Miralaei$^{^{1, a}}$ \, \,  Ali Mohammadian$^{^{2, b, c}}$ \, \,  Behruz Tayfeh-Rezaie$^{^{1, b}}$ \\
$^{^1}$School of Mathematics,    Institute for Research in
Fundamental  Sciences (IPM),  \\  P.O. Box 19395-5746, Tehran, Iran \\ \vspace{2mm}
$^{^2}$School of Mathematical Sciences,    Anhui University, \\
Hefei 230601,  Anhui,    China \\ \vspace{2mm}
\href{mailto:m.miralaei@ipm.ir}{m.miralaei@ipm.ir} \qquad  \href{mailto:ali\_m@ahu.edu.cn}{ali\_m@ahu.edu.cn} \qquad     \href{mailto:tayfeh-r@ipm.ir}{tayfeh-r@ipm.ir}\\ \vspace{7mm}}

\blankfootnote{\hspace*{-6mm}$^{^a}$Partially  supported by a grant from IPM.\\
$^{^b}$Partially  supported by Iran  National  Science Foundation   under project number  99003814.\\
$^{^c}$Partially  supported   by the    Natural Science Foundation of Anhui Province  with  grant identifier 2008085MA03 and by the National Natural Science Foundation of China with  grant number 12171002.}

\date{}

\maketitle

\begin{abstract}
For   two graphs $G$ and $F$, we say  that  $G$   is  weakly $F$-saturated
if $G$  has    no   copy of  $F$  as a   subgraph and one  can join all  the   nonadjacent pairs of vertices
of $G$ in some order so    that a new copy of $F$  is created at each step.
The  weak saturation number   $\mathrm{wsat}(n, F)$ is the
minimum number of edges of  a weakly $F$-saturated graph on $n$ vertices.
In this paper, we examine $\mathrm{wsat}(n, K_{s, t})$, where $K_{s, t}$ is the complete bipartite graph with parts of  sizes $s$ and $ t $.
We determine
$\mathrm{wsat}(n, K_{2, t})$ for all $n\geqslant t+2$ which particularly     corrects    a previous report in the literature.
It is  also shown    that $\mathrm{wsat}(s+t, K_{s,t})=\binom{s+t-1}{2}$ if $\gcd(s, t)=1$  and $\mathrm{wsat}(s+t, K_{s,t})=\binom{s+t-1}{2}+1$    otherwise.\\[-2mm]

\noindent{\bf Key words and phrases:}  Complete bipartite graph,  Weak saturation number. \\[-2mm]

\noindent{\bf 2020 Mathematics Subject Classification:}    05C35.  \\ \vspace{9mm}
\end{abstract}

\section{Introduction}

All graphs throughout this paper are finite, undirected, and without loops or multiple edges.  The
edge set of a graph $G$ is  denoted by  $E(G)$.
For   given two  graphs $G$ and $F$, a spanning subgraph $H$ of $G$ is said to be a  {\sl weakly $F$-saturated subgraph}  of $G$  if $H$ has no copy of  $F$   as a  subgraph and   there is an ordering $e_1, e_2, \dots$  of edges in $E(G)\setminus E(H)$ such that for $i=1, 2, \ldots$  the addition of $e_i$ to the spanning subgraph of $G$  with the edge set  $E(H)\cup\{e_1, \ldots, e_{i-1}\}$ creates a  copy    of   $F$ that contains   $e_i$.
The minimum number of edges in a weakly  $F$-saturated subgraph of $G$ is called the {\sl weak saturation number}  of $F$ in $G$ and  is denoted by $\mathrm{wsat}(G, F)$.
For the purpose of simplification, a weakly $F$-saturated subgraph of $K_n$ is said to be  a {\sl weakly $F$-saturated  graph} and    $\mathrm{wsat}(K_n, F)$ is written as   $\mathrm{wsat}(n, F)$, where   $K_n$ is the complete   graph on   $n$ vertices.
For example, each  path  graph
is    weakly $K_3$-saturated and it is easily seen  that $\mathrm{wsat}(n, K_3)=n-1$ due to the connectivity.

Determining the exact value of  $\mathrm{wsat}(n, F)$   for a given graph $F$ is  often  quite difficult.
It is worth mentioning that the study of any  extremal parameter    is an important task in    graph theory and often receives  a great deal of  attention.
Weak saturation is closely related
to the so-called  `graph bootstrap percolation'    which  was    introduced for the first time in  \cite{Balogh}.
The notion of  weak saturation  was  initially introduced by Bollob\'{a}s \cite{Bella} in 1968.
Although  the  weak  saturation number has   been   studied for a long time, related   literature is still poor. Indeed, the main difficulty lies in proving lower bounds where usually combinatorial methods do not seem to work. Most    arguments that have been used in this area    are   based on algebraic methods.
However, our proofs in the current paper are all combinatorial.
For    results on  weak saturation and related topics, we refer to the survey \cite{Faud.3}.

Lov\'{a}sz \cite{Lovasz} proved   that      $\mathrm{wsat}(n, K_r)=(r-2)n-\binom{r-1}{2}$    when    $n\geqslant r\geqslant2$,    settling a conjecture of Bollob\'{a}s  \cite{Bella}. The   result is   also proved by   Frankl \cite{Frankl}, Kalai \cite{Kalai1}, Alon \cite{Alon},   and Yu \cite{Yu}.
Surprisingly, these proofs all  are   based on algebraic techniques and    no combinatorial  proof has been found so far.

After complete graphs, the next most natural problem to consider regarding weak saturation numbers is    description  of  the   behavior    of  $\mathrm{wsat}(n, K_{s,t})$, where $K_{s,t}$ is the complete bipartite graph with parts of  sizes $s$ and $ t $. Borowiecki and  Sidorowicz \cite{Borowiecki} proved that  $\mathrm{wsat}(n, K_{1,t})=\binom{t}{2}$ provided    $n\geqslant t+1$.   A short proof of this  result is given in  \cite{Faudree}.
The equality  $\mathrm{wsat}(n, K_{2,2})=n$ follows  from Theorem 16 of \cite{Borowiecki} for all   $n\geqslant4$.
Faudree,  Gould, and  Jacobson   \cite{Faudree} showed that $\mathrm{wsat}(n, K_{2,3})=n+1$ for   all $n\geqslant5$. Using multilinear algebra,
Kalai \cite{Kalai2}   established  that   $\mathrm{wsat}(n, K_{t, t})=(t-1)n-\binom{t-1}{2}$  if   $n\geqslant4t-4$.
This result  was also proved by Kronenberg, Martins,  and Morrison   \cite{Kronenberg} for every  $n\geqslant3t-3$ by a linear algebraic argument. They also determined $\mathrm{wsat}(n, K_{t,t+1})$ for any $ n\geqslant 3t-3$.

The authors of \cite{cui} claimed that they determined $\mathrm{wsat}(n, K_{2, t})$     for   $t\geqslant4$ and  $n\geqslant2t-1$,
but missing and confusing proof details prevent complete confidence in the result.
In particular, ambiguous wording in  their half page argument to prove the lower bound on $\mathrm{wsat}(n, K_{2, t}) $ makes the proof difficult to follow.
We believe that this problem is not already solved.
In the current    paper, we fill  this  gap  in the literature  by  proving   the following result.

\begin{theorem}\label{main.thm}
For every two integers $n, t$ with  $t\geqslant3$ and $n\geqslant t+2$, the following statements  hold.
\begin{itemize}
\item[{\rm (i)}] If $ t $ is odd, then  $\mathrm{wsat}(n, K_{2, t})=n-2+\binom{t}{2}$.
\item[{\rm (ii)}] If $ t $ is even and $n\leqslant2t-2$, then  $\mathrm{wsat}(n, K_{2, t})= n-1+\binom{t}{2}$.
\item[{\rm (iii)}] If $ t $ is even and $n\geqslant2t-1$, then  $\mathrm{wsat}(n, K_{2, t})=n-2+\binom{t}{2}$.
\end{itemize}
\end{theorem}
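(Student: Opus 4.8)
The plan is to establish, for each of the three cases, a matching upper and lower bound: the upper bounds by exhibiting an economical starting graph together with an explicit ordering of the edges that are added during the process, and the lower bounds by a combinatorial argument whose heart is the base case $n=t+2$, where the parity of $t$ first makes itself felt. Throughout I would use the elementary \emph{spreading move}: if two vertices $x,z$ have at least $t-1$ common neighbours and $y$ is a neighbour of $z$ that is not a neighbour of $x$, then adding the edge $xy$ creates a copy of $K_{2,t}$ whose parts are $\{x,z\}$ and (the common neighbours together with $y$). Note also that a graph is $K_{2,t}$-free precisely when every codegree is at most $t-1$, which is what one checks for the starting graphs.

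For the upper bounds, the cheap and robust mechanism is a \emph{clique-growth cascade}: once the current graph contains a clique on at least $t+1$ vertices, any further vertex joined by a single edge to a clique vertex can be absorbed into the clique one edge at a time via the spreading move. Iterating this turns $K_{t+1}$ together with $n-t-1$ pendant edges at a fixed hub into $K_n$, giving $\mathrm{wsat}(n,K_{2,t})\le\binom{t}{2}+n-1$ and settling case (ii). To remove the last edge in cases (i) and (iii) I would begin from a start graph carrying one fewer edge on the core (a deficient clique, or the complement of a Hamiltonian path on the base $t+2$ vertices, extended by pendant vertices) and show that the spreading move can first \emph{repair} the missing core density, producing a genuine $K_{t+1}$, after which the cascade completes $K_n$. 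The feasibility of this repair is exactly where the parity enters: for odd $t$ it already succeeds at $n=t+2$, whereas for even $t$ the repair has to be routed through additional vertices, which is the source of the threshold $n\ge 2t-1$ in case (iii) and of the extra edge in case (ii).

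For the lower bounds the baseline is a connectivity invariant. In any spreading move the vertices $x,z$ share at least $t-1\ge 1$ common neighbours, and $y$ is adjacent to $z$, so $x$ and $y$ already lie in a common component; hence the number of components is preserved throughout, and since the closure is the connected graph $K_n$, every weakly $K_{2,t}$-saturated graph is connected and has at least $n-1$ edges. The real content is to gain the remaining $\binom{t}{2}-1$ (respectively $\binom{t}{2}$) edges. The natural attempt is an induction on $n$: locate a vertex $v$ of degree $1$, argue that $G-v$ is again weakly $K_{2,t}$-saturated on $n-1$ vertices, conclude $|E(G)|=|E(G-v)|+1$, and reduce to the base case $n=t+2$; graphs of minimum degree at least $2$ would be treated separately since they already carry many edges.

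I expect the lower bound to be the main obstacle, for two reasons. First, the clean ``delete a pendant and lose one edge'' induction \emph{cannot} hold verbatim for even $t$: the bound is non-monotone in its increments there, since case (ii) at $n=2t-2$ and case (iii) at $n=2t-1$ both equal $2t-3+\binom{t}{2}$, so passing to the $(2t-1)$-st vertex does not raise the minimum. This flat step is precisely the point at which there is finally enough room for the cheaper repair construction, so any inductive scheme must be parity- and range-aware, and the preservation of weak saturation under deletion of $v$ must be argued with care (a deleted pendant could destroy a common neighbour used by a later move). Second, and hardest, is the base case $n=t+2$: one must show directly that $\binom{t+1}{2}$ edges suffice for odd $t$ while $\binom{t+1}{2}+1$ are forced for even $t$. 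As the complement of the start graph then has exactly $t+1$ addable edges, I expect a $\mathbb{Z}/2$-counting or pairing argument on this complement, with the obstruction being the parity of $t+1$; this is the combinatorial core that also governs the $\gcd(2,t)$ dichotomy of the companion result, and pinning it down—together with locating the exact threshold $n=2t-1$ for even $t$—is where the genuine difficulty lies.
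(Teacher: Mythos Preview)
Your upper-bound strategy is sound and matches the paper's: attaching pendant vertices to an optimal $(t{+}2)$-vertex graph (Lemma~\ref{pendent}) handles (i) and (ii), and an explicit construction (the graph $\mathbbmsl{G}_{n,t}$ of Figure~\ref{figM1}) handles (iii). Your identification of the base $n=t+2$ as governed by $\gcd(2,t)$ is also correct; the paper proves it by showing that the complement of any weakly $K_{s,t}$-saturated graph on $s+t$ vertices is a forest, and is moreover disconnected when $\gcd(s,t)>1$ (Lemma~\ref{complement}), which is a structural statement rather than a parity count but has the same effect.

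The genuine gap is the general lower bound $\mathrm{wsat}(n,K_{2,t})\ge n-2+\binom{t}{2}$. Your plan reduces to the base case by deleting pendant vertices, with graphs of minimum degree at least $2$ ``treated separately since they already carry many edges.'' But $\delta(G)\ge 2$ only yields $e(G)\ge n$, which misses the target by roughly $\binom{t}{2}$; no cheap degree argument closes this. The pendant-deletion step is also not automatic: the paper proves that $G-v$ remains weakly $K_{2,t}$-saturated only under the restriction $n\le 2t-2$ (Lemma~\ref{degone}), and for even $t$ at $n=2t-1$ the extremal graph $\mathbbmsl{G}_{2t-1,t}$ has a pendant whose removal leaves strictly fewer than $\mathrm{wsat}(2t-2,K_{2,t})$ edges, so the deletion genuinely fails there---a phenomenon you correctly flag but do not resolve. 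The paper's route is entirely different: it first \emph{adds} $t^3$ pendants at every vertex (so pendant scarcity is a non-issue) and then runs a long forward process that iteratively finds pairs with codegree at least $t-1$, reroutes edges under a black/red two-colouring while building a structured set $A$ satisfying ten invariants (P.1)--(P.10), and terminates in one of four configurations (T.1)--(T.4), each analysed via an explicit edge-count identity (Lemma~\ref{generallower}); the refinement for even $t$ and $n\le 2t-2$ (Lemma~\ref{event}) reruns the same machinery with yet finer case analysis. Nothing in your sketch anticipates this, and it is the core of the paper.
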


The proofs  of the lower bounds  of  Theorem \ref{main.thm}  which are  presented
in Section \ref{aval123}  form   the most involved part   of the paper. In Section \ref{span123}, we establish   the following    theorem which particularly proves   Theorem \ref{main.thm} for  the  initial   case    $n=t+2$.
Generally,     determination of  $\mathrm{wsat}(n, F)$ for  graphs   $F$ on   $n$ vertices seems to be an attractive problem.

\begin{theorem}\label{Kst}
For every two    positive  integers $s$ and $t$,
\begin{eqnarray*}
\mathrm{wsat}(s+t, K_{s, t})=\left\{
\begin{array}{ll}\vspace{-4mm}&\\
\mathlarger{\binom{s+t-1}{2}}  &  \quad  \text{\large if }   \mathlarger{\gcd(s,t)=1}\text{\large ,} \\ \\
\mathlarger{\binom{s+t-1}{2}+1}  & \quad   \text{\large otherwise.}\\\vspace{-3.75mm}&
\end{array}\right.
\end{eqnarray*}
\end{theorem}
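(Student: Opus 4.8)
Throughout, set $n=s+t$, so that every copy of $K_{s,t}$ in $K_n$ is \emph{spanning}: it is a partition of the vertex set into an $s$-set and a $t$-set all of whose crossing pairs are edges. The plan is to pass to the complement. For a spanning subgraph $H$ of $K_n$ put $G=\overline H$, and call a bipartition of the vertices into parts of sizes $s$ and $t$ an \emph{$(s,t)$-cut}. Then $H$ is $K_{s,t}$-free exactly when $G$ has a crossing edge in every $(s,t)$-cut, and adding a non-edge $uv$ of $H$ creates a new copy of $K_{s,t}$ through $uv$ exactly when, at that moment, $uv$ is the \emph{unique} edge of $G$ crossing some $(s,t)$-cut. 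Hence $H$ is weakly $K_{s,t}$-saturated iff the edges of $G$ can be deleted one at a time, each being (at the time of its deletion) the only edge across some $(s,t)$-cut, while $G$ keeps meeting every $(s,t)$-cut until it is edgeless. Since $\mathrm{wsat}(s+t,K_{s,t})=\binom n2-\max|E(G)|$ over all such $G$, I would reduce the theorem to determining this maximum.

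The first step is the observation that when $uv$ is deleted its certifying cut becomes empty, so its endpoints fall into different components; that is, \emph{every deleted edge is a bridge at the moment of deletion}. Each deletion then raises the number of components by one, so a deletable $G$ with $c$ components and $m$ edges satisfies $m=n-c$ and is a forest with $m\le n-1$. Read backwards, $G$ is assembled from $n$ isolated vertices by \emph{merges}, a merge being legal when the current multiset of component sizes admits a sub-multiset summing to $s$ (an empty $(s,t)$-cut), after which one component from each side is joined. The problem thus becomes a purely combinatorial game on a partition of $n$: starting from $1^n$, repeatedly choose an $(s,t)$-split of the parts and merge one part from each side, minimizing the number of parts reached. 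As each merge drops the part count by one, $m=n-(\text{final number of parts})$, and $m\le n-1$ already gives $\mathrm{wsat}(s+t,K_{s,t})\ge\binom{n-1}2$.

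Next I would sharpen the lower bound when $d=\gcd(s,t)>1$ by showing a single part is unreachable. Consider $\Phi=\sum_{\text{parts }p}(p\bmod d)$ with $p\bmod d\in\{0,\dots,d-1\}$. A merge of $a,b$ changes $\Phi$ by $((a+b)\bmod d)-(a\bmod d)-(b\bmod d)\in\{0,-d\}$, so $\Phi$ is nonincreasing, stays a multiple of $d$, and starts at $\Phi=n>0$. If $\Phi$ ever reached $0$ it would do so from $d$, at which point exactly two parts have nonzero residues summing to $d$ and all others are $\equiv0\pmod d$; but in every $(s,t)$-split each side has size $\equiv0\pmod d$, which forces those two parts onto the \emph{same} side (a side holding exactly one of them would have size $\not\equiv0\pmod d$), so they can never be merged across a cut — a contradiction. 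Hence $\Phi>0$ always, $[n]$ is unreachable, the final part count is at least $2$, $m\le n-2$, and $\mathrm{wsat}(s+t,K_{s,t})\ge\binom{n-1}2+1$. The same bound $\Phi\ge d$ shows any reachable two-part partition $[p_1,p_2]$ has both parts indivisible by $d$, hence $\ne s$, so it automatically meets every $(s,t)$-cut, which is exactly what the constructions need.

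For the matching constructions — the technical heart — I would use a Euclidean reduction. Assuming $s<t$, first build one component $S$ of size exactly $s$ keeping the other $t$ vertices as singletons (legal: a growing part of size $k<s$ sits with $s-k$ singletons on the $s$-side and absorbs a singleton from the other side), reaching $[s,1^t]$; then freeze $S$ on the $t$-side of every cut, where it consumes $s$ of that side's budget $t$, so the moves avoiding $S$ are precisely the legal moves of the $(s,t-s)$-game on the remaining $t$ vertices. Since $\gcd(s,t-s)=\gcd(s,t)$ and $t<n$, induction on $n$ applies: if $\gcd(s,t)=1$ (the case $s=1$ being the trivial leaf-by-leaf build) the sub-game collapses the $t$ vertices to one component, giving $[s,t]$ and then $[n]$, whence $G$ is a spanning tree and $H=\overline G$ has $\binom{n-1}2$ edges and is $K_{s,t}$-free by connectivity; if $d>1$ the sub-game reaches two parts $[c_1,c_2]$ and one more merge of $S$ with $c_1$ gives $[s+c_1,c_2]$, i.e. two parts and $m=n-2$; the remaining case $s=t$ is handled by freezing a single vertex, which turns the game into the coprime $(s-1,s)$-game and yields $[2s-1,1]$. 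I expect the main obstacle to be making this reduction fully rigorous: verifying the exact correspondence between moves avoiding $S$ and moves of the $(s,t-s)$-game, tracking which parameter is larger after each subtraction (reordering $s$ and $t-s$), and checking that the bases $s=1$ and $s=t$ terminate the recursion so that the coprime construction feeds correctly into the $\gcd>1$ construction.
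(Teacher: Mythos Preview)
Your proposal is correct and follows essentially the same route as the paper: pass to the complement, observe that each deletable edge is a bridge so $\overline H$ is a forest, use divisibility of component sizes by $d=\gcd(s,t)$ to force disconnectedness when $d>1$, and run a Euclidean $(s,t)\mapsto(s,t-s)$ recursion for the upper-bound constructions (the paper names the extremal forests explicitly as $P_{s+t}$ and $P_{s+t-1}\sqcup K_1$ and gives the saturation orders by hand, whereas you build the forest recursively and obtain the $d>1$ case by appending one merge to the coprime construction, with $s=t$ handled by freezing a vertex). One wording slip to fix: the clause ``while $G$ keeps meeting every $(s,t)$-cut until it is edgeless'' is false after the very first deletion, since that deletion empties its certifying cut; only the \emph{initial} $G$ must meet every $(s,t)$-cut, and as that is all you actually use (your merge-game formulation and the final-state checks are stated correctly), the argument is unaffected.
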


A relatively new trend in extremal graph theory is to extend the classical deterministic results  to random analogues. Such  study reveals the behavior of extremal parameters for a typical graph. For  instance,  the problem  of  determination of    $\mathrm{wsat}(\mathbbmsl{G}(n, p), K_{s,t})$ for given fixed integers $s$ and $ t$    is  still   unsolved   in general case, where  $\mathbbmsl{G}(n, p)$ denotes    the Erd\H{o}s--R\'{e}nyi  random graph model. Kalinichenko and
Zhukovskii  \cite{Kalinichenko}  presented   some  sufficient  conditions  for which   $\mathrm{wsat}(\mathbbmsl{G}(n, p), F)=\mathrm{wsat}(n, F)$ with high probability.
Theorem \ref{main.thm}  combined with Corollary 1 of \cite{Kalinichenko} yields   that with high probability $\mathrm{wsat}(\mathbbmsl{G}(n, p),K_{2,t})=n-2+\binom{t}{2}$ for each   constant  $p\in(0, 1)$.

Below, we introduce more  notations  and terminologies       that we use in the rest of the  paper.
Let $G$ be a graph. The vertex set of $G$ is  denoted by $V(G)$  and the {\sl order}  of $G$ is defined  as $|V(G)|$. We  set  $e(G)=|E(G)|$.
For every two adjacent  vertices $u$ and $v$,  we denote the edge joining    $u$ and $v$ by $uv$.
The {\sl complement}  of $G$, denoted by $\overline{G}$,   is a graph with vertex set $V(G)$
in which $uv\in E(\overline{G})$     if $u\neq v$ and  $uv\notin E(G)$.
For a  subset   $X$ of $V(G)$, we denote the induced subgraph of $G$ on $X$  by $G[X]$.
For a  subset   $Y$ of $E(G)$, we denote by $G-Y$  the   graph obtained from  $G$ by removing the edges in  $Y$.
For a  subset   $Z$ of $E(\overline{G})$,   we adopt the notation  $G+Z$  to denote  the graph with vertex set $V(G)$ and edge set $E(G)\cup Z$.
For simplicity, we write $G-e$ instead of $G-\{e\}$ and  $G+e$ instead  of $G+\{e\}$.
For a vertex     $v$ of $G$, denote by $G-v$ the graph obtained from $G$   by removing  $v$  and all edges incident to $v$.
Also, define the set of neighbors of $v$ as       $N_G(v)=\{x\in V(G) \, | \, x \text{   is adjacent to } v\}$ and   the   {\sl degree}  of $v$    as $\deg_G(v)=|N_G(v)|$.
The   minimum degree of vertices of $ G$ is  denoted by  $ \delta(G)$.
For the sake of convenience, we set $N_G[u]=\{u\}\cup N_G(u)$ and $N_G(u, v)=N_G(u)\cap N_G(v)$.
For every two subsets $A$ and $B$ of $V(G)$, let $E_G(A, B)$ denote the set  of  edges of $G$  having  an   endpoint  in $A$ and the other endpoint  in $B$. We set $e_G(A, B)=|E_G(A, B)|$.
For simplicity, we write $E_G(v, A)$ instead of $E_G(\{v\}, A)$,
$E_G(A)$ instead of $E_G(A, A)$ and $e_G(A)$ instead of $e_G(A, A)$.
The {\sl    union} of  two vertex disjoint graphs   $G_1$ and $G_2$,   denoted by
$G_1\sqcup  G_2$,   is the graph  with  the  vertex   set  $V(G_1)\cup V(G_2)$  and the  edge  set    $E(G_1)\cup E(G_2)$.
The {\sl  join}  of two vertex disjoint graphs $G_1$ and $G_2$,   denoted by  $G_1\vee  G_2$,  is the graph  obtained from $G_1\sqcup G_2$ by
joining every  vertex in $V(G_1)$ to every vertex in $V(G_2)$.

\section{Determination of $\boldsymbol{\mathrm{wsat}(s+t, K_{s,t})}$}\label{span123}

The problem  of determining  $\mathrm{wsat}(n, F)$ when the graph   $F$ is of order $n$  is interesting to explore.
In this section, we solve this problem when $F$ is a  complete bipartite graph.
The following lemma helps us  to get a   lower bound.

\begin{lemma}\label{complement}
Let $s, t$ be       positive integers and let $G$ be a weakly $K_{s,t}$-saturated graph of order  $s+t$. Then, $\overline{G}$ has no cycle. Moreover,   if $\gcd(s, t)\neq 1$, then $\overline{G}$ is disconnected.
\end{lemma}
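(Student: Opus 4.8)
The plan is to work entirely in the complement and translate the weak saturation process into an edge-deletion process on $\overline{G}$. Since $G$ has exactly $s+t$ vertices, any copy of $K_{s,t}$ spans all of $V(G)$ and is therefore recorded by a partition of $V(G)$ into sets $S$ and $T$ with $|S|=s$ and $|T|=t$ in which all $st$ edges between $S$ and $T$ are present. Consequently, if $e_1, e_2, \ldots, e_m$ is a saturating order of the non-edges of $G$ and we set $F_i=\overline{G}-\{e_1, \ldots, e_i\}$, then at step $i$ the edge $e_i$ is precisely the \emph{unique} edge of $F_{i-1}$ that crosses some partition $(S_i, T_i)$ with $|S_i|=s$ and $|T_i|=t$, while every remaining edge of $F_{i-1}$ lies inside $S_i$ or inside $T_i$. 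The process begins at $F_0=\overline{G}$ and terminates at the empty graph $F_m$. I would first isolate the following structural description of a single step: the crossing edge $e_i$ lies in one component $K$ of $F_{i-1}$ and, because every other edge of $K$ is non-crossing, deleting $e_i$ splits $K$ into two pieces $K_1\subseteq S_i$ and $K_2\subseteq T_i$, whereas every other component of $F_{i-1}$ is contained entirely in $S_i$ or entirely in $T_i$.

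For the acyclicity claim, suppose $\overline{G}$ contained a cycle $C$ and consider the first of its edges to be deleted, say $e_i$. Since no edge of $C$ was removed before step $i$, all edges of $C$ still belong to $F_{i-1}$, so $e_i$ is the unique edge of $C$ that crosses $(S_i, T_i)$. This contradicts the elementary fact that a cycle meets every vertex-partition in an even number of edges. Hence $\overline{G}$ is a forest.

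For the second statement, set $d=\gcd(s,t)\geqslant2$ and argue by contradiction, assuming $\overline{G}$ is connected and hence a spanning tree. I would track the orders of the components of $F_i$ modulo $d$ and prove by induction the invariant that every component of $F_i$ has order divisible by $d$. The base case holds because $F_0$ has the single component of order $s+t\equiv0\pmod d$. For the inductive step, the structural description above gives $s=|S_i|=|K_1|+\sum_{j} c_j$, where the $c_j$ denote the orders of the components of $F_{i-1}$ other than $K$ that are placed in $S_i$; by the induction hypothesis each $c_j\equiv0\pmod d$, and $s\equiv0\pmod d$, which forces $|K_1|\equiv0$ and hence $|K_2|=|K|-|K_1|\equiv0\pmod d$. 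Thus all components of $F_i$ remain divisible by $d$. But the terminal graph $F_m$ consists of $s+t$ isolated vertices, whose components have order $1\not\equiv0\pmod d$ since $d\geqslant2$, a contradiction. Therefore $\overline{G}$ must be disconnected.

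The part I expect to demand the most care is the structural description of a single deletion step, since both arguments hinge on it: one must verify that the unique crossing edge is genuinely confined to a single component of $F_{i-1}$ and that its removal splits that component cleanly along the partition, with all other components lying wholly on one side. Once this is established, the parity argument for the forest property and the divisibility invariant for disconnectedness both follow quickly; the only real subtlety is making the reformulation airtight, namely confirming that a newly created $K_{s,t}$ in an $(s+t)$-vertex graph forces a spanning $(s,t)$-bipartition and therefore exactly one crossing non-edge at each step.
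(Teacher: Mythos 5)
Your proposal is correct and takes essentially the same approach as the paper: the same first-deleted-edge-of-the-cycle argument combined with the fact that a cycle crosses any vertex partition an even number of times gives acyclicity, and the same induction showing every component of $\overline{G}-\{e_1,\ldots,e_i\}$ has order divisible by $d=\gcd(s,t)$ gives disconnectedness when $d\neq 1$. The structural step you flag as delicate is precisely the paper's observation that $e_i$ is the unique crossing edge remaining, so (with $\overline{G}$ already known to be a tree in the second part) its deletion splits one component cleanly along the partition while every other component lies wholly inside one side.
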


\begin{proof}
Fix an order $ e_1, e_2, \ldots $ of  $ E(\overline{G}) $ that is obtained from a weakly $ K_{s,t} $-saturation process on $ G $.
By contradiction, suppose that $ \overline{G} $ has a cycle, say $ C $. Let $ e_i $ be the first edge of $ C $ that  appears in the order $ e_1, e_2, \ldots$ In view of    the definition of weakly $ K_{s,t} $-saturation process,  there is  a partition $ \{A, B\} $ of $ V(G) $ with $ |A|=s $ and $ |B|=t $ such that $ e_i $ is the only missing edge between $ A $ and $ B $ in $G+\{e_1, \ldots, e_{i-1}\}$. So, both endpoints of each  edge among  $e_{i+1}, e_{i+2}, \ldots$ belong to one of   $ A $ and   $ B $. This is impossible,  since $ C $ has to  pass  through at least one   edge $ e_j$ with  $ j>i $ having endpoints in both   $ A $ and   $ B $. This  shows   that  $\overline{G}$ has no cycle.

Now, assume that $\overline{G} $ is connected. As we saw above,  $ \overline{G}$ is a tree.  Let $H_0=\overline{G}$ and    $ H_i=\overline{G}-\{e_1, \ldots, e_i\}$  for any $i\geqslant1$.
We claim that for any $ i\geqslant0 $,  $ H_i$ is a forest whose connected  components are of order divisible by $ d $, where    $d=\gcd(s,t)$.  Since   $\overline{G}-\{e_1, e_2, \ldots\} =\overline{K_{s+t}}$,  we find that  $d=1 $, as required.

We prove the claim by induction on $ i $. The claim is clearly valid   for $ i=0 $.  So,  assume that $ i\geqslant 1 $.
According to the  definition of weakly $ K_{s,t} $-saturation process, there is a partition $ \{A, B\} $ of $ V(G) $ with $ |A|=s $ and $ |B|=t $ such that $ e_i $ is the only missing edge between $ A $ and $ B $ in  $G+\{e_1, \ldots, e_{i-1}\}$. Hence,  $ e_i $ is the only edge  in $ H_{i-1} $  between $ A $ and $ B $.
Let $ C_1, \ldots, C_i $ be the connected  components of $ H_{i-1} $. Without loss of generality,  assume that $ e_i\in E(C_1) $. So,  the connected  components of $ H_i $ are $ C_1', C_1'', C_2, \ldots, C_i $, where $ C_1' $ and $ C_1'' $ are     respectively the induced subgraphs of $ C_1 $ on $A\cap  V(C_1)$ and $B\cap  V(C_1)$.
As   $ e_i $ is the only edge  in $ H_{i-1} $ between $ A $ and $ B $, we conclude that  either $ V(C_i)\subseteq A$ or $ V(C_i)\subseteq B $ for any   $i\geqslant2 $.
It follows that $ A $ is a disjoint union of  $ V(C'_1) $ and some sets among $ V(C_2), \ldots, V(C_i) $.  The induction hypothesis yields that  $ |V(C_2)|, \ldots, |V(C_i)| $ are multiples  of  $d$. This and the divisibility of  $|A|$    by $d$  imply    that $ | V(C'_1)| $ is a multiple of  $d$. A similar argument works  for  $ |V(C_1'')| $.  The claim is established.
\end{proof}

The following consequence immediately follows from  Lemma \ref{complement}.

\begin{corollary}\label{lower-I-II}
For every integers $s$ and $t$,   $\mathrm{wsat}(s+t, K_{s, t})\geqslant\binom{s+t-1}{2}$. Moreover, if $\gcd(s, t)\neq 1$, then $\mathrm{wsat}(s+t, K_{s, t})\geqslant\binom{s+t-1}{2}+1$.
\end{corollary}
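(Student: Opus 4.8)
The plan is to translate the structural information about $\overline{G}$ supplied by Lemma \ref{complement} directly into an edge count for $G$. The starting point is the complementarity identity $e(G)+e(\overline{G})=\binom{s+t}{2}$, valid for every graph $G$ of order $s+t$. Since $\mathrm{wsat}(s+t, K_{s,t})$ is by definition the minimum of $e(G)$ taken over all weakly $K_{s,t}$-saturated graphs $G$ on $s+t$ vertices, minimizing $e(G)$ over this family is the same as maximizing $e(\overline{G})$ over it, so it suffices to bound $e(\overline{G})$ from above.

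First I would invoke the first conclusion of Lemma \ref{complement}, namely that $\overline{G}$ is acyclic and hence a forest. A forest on $s+t$ vertices with $c$ connected components has exactly $(s+t)-c$ edges, and $c\geqslant1$, so $e(\overline{G})\leqslant s+t-1$. Feeding this into the complementarity identity and simplifying via Pascal's rule in the form $\binom{s+t}{2}-(s+t-1)=\binom{s+t-1}{2}$ gives $e(G)\geqslant\binom{s+t-1}{2}$. As this holds for every weakly $K_{s,t}$-saturated graph of order $s+t$, the first inequality of the corollary follows.

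For the \emph{moreover} part, I would use the second conclusion of Lemma \ref{complement}: when $\gcd(s,t)\neq1$, the forest $\overline{G}$ is disconnected, so $c\geqslant2$ and therefore $e(\overline{G})\leqslant s+t-2$. Running the same complementarity argument then yields $e(G)\geqslant\binom{s+t}{2}-(s+t-2)=\binom{s+t-1}{2}+1$, which is the second claimed bound.

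Since all the genuine structural work is carried out in Lemma \ref{complement}, I expect no real obstacle here: the entire content is the elementary observation that an acyclic graph on a fixed vertex set has boundedly many edges, with the bound dropping by one once the graph is forced to be disconnected. The only thing requiring care is the routine arithmetic identity relating $\binom{s+t}{2}-(s+t-1)$ to $\binom{s+t-1}{2}$, which is immediate from Pascal's rule.
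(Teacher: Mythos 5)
Your proposal is correct and follows exactly the route the paper intends: the corollary is stated there as an immediate consequence of Lemma \ref{complement}, with the implicit argument being precisely your complementation count, namely that an acyclic $\overline{G}$ has at most $s+t-1$ edges (at most $s+t-2$ once disconnectedness is forced by $\gcd(s,t)\neq1$), so $e(G)\geqslant\binom{s+t}{2}-(s+t-1)=\binom{s+t-1}{2}$, respectively $e(G)\geqslant\binom{s+t-1}{2}+1$. Nothing is missing; you have simply written out the two-line computation the authors left to the reader.
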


We present  the following two lemmas  to obtain   a tight    upper bound.
We use   the notation  $P_n$ for   the  path   graph   of order  $n$.

\begin{lemma}\label{up-general}
Let $s$ and $t$ be     positive integers. Then,  $\mathrm{wsat}(s+t, K_{s,t})\leqslant \binom{s+t-1}{2}+1$.
\end{lemma}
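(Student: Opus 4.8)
The plan is to exhibit a single graph on $s+t$ vertices with exactly $\binom{s+t-1}{2}+1$ edges that is weakly $K_{s,t}$-saturated. By the symmetry $K_{s,t}=K_{t,s}$ I assume $s\leqslant t$, and I dispose of the degenerate case $s=1$ separately: there $K_t\sqcup K_1$ is $K_{1,t}$-free (its maximum degree is $t-1$), has $\binom{t}{2}=\binom{s+t-1}{2}$ edges, and is saturable by adding the edges at the isolated vertex one at a time, each creating a star $K_{1,t}$; this is below the claimed bound. For $s\geqslant 2$ I take the graph $G$ with $\overline{G}=P_{s+t-1}\sqcup K_1$, writing the path as $u_1u_2\cdots u_{s+t-1}$ and the isolated vertex as $w$. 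A direct count gives $e(G)=\binom{s+t}{2}-(s+t-2)=\binom{s+t-1}{2}+1$, so it remains only to verify that this $G$ is weakly $K_{s,t}$-saturated.

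First I would check that $G$ has no copy of $K_{s,t}$. Since $|V(G)|=s+t$, any such copy is spanning and corresponds to a partition of $V(G)$ into parts of sizes $s$ and $t$ across which $\overline{G}$ has no edge. This would force the connected path $P_{s+t-1}$ to lie inside a single part, which is impossible because $s+t-1>t\geqslant s$ when $s\geqslant 2$. For the saturation I reformulate the task in the complement, exactly as in the proof of Lemma~\ref{complement}: I must order the edges of $\overline{G}$ as $e_1,e_2,\dots$ so that when $e_i$ is deleted from the current forest $\overline{G}-\{e_1,\dots,e_{i-1}\}$ there is a partition $\{A,B\}$ with $|A|=s$, $|B|=t$ for which $e_i$ is the unique remaining edge joining $A$ and $B$. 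Deleting an edge splits one subinterval of the path into two pieces that must be placed on opposite sides, while every other component must lie entirely on one side, so each step reduces to a subset-sum feasibility question.

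The heart of the argument is choosing the order so that this feasibility always holds; a naive end-peeling fails once $s\geqslant 3$, since the surviving interval is then too large to fit on either side. Instead I would begin with the balanced cut $e_1=u_su_{s+1}$, which splits the path into $X=u_1\cdots u_s$ of size $s$ and $Y=u_{s+1}\cdots u_{s+t-1}$ of size exactly $t-1$; the partition $\{u_1,\dots,u_s\}$ and $\{u_{s+1},\dots,u_{s+t-1},w\}$ certifies this step. Keeping $Y$ intact, I next peel $X$ one vertex at a time: at each such step the single vertex just detached from $X$ is placed together with all of $Y$ to form a part of size $(t-1)+1=t$, and the remaining vertices form the part of size $s$, so the detached edge is the unique crossing edge. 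After $X$ has been reduced to singletons there are $s+1$ isolated vertices available, and I then peel $Y$ one vertex at a time, padding each detached vertex with $s-1$ of these singletons to build the part of size $s$; this is always possible because at every such step at least $s+1$ singletons are present.

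The main obstacle is precisely the discovery and verification of this ordering. No single uniform rule works throughout: what makes every step feasible is the role of the size-$(t-1)$ block $Y$ as an exact ``bucket'' during the first peeling phase, together with the switch to singleton-padding in the second phase. Verifying the two phases is then a routine but careful check at each step (equivalently, an easy induction on the number of deleted edges), confirming in each case that the exhibited partition has parts of sizes $s$ and $t$ and a unique crossing edge. Since the three phases account for $1+(s-1)+(t-2)=s+t-2$ deletions, i.e.\ all edges of $\overline{G}$, the process terminates at $K_{s+t}$, so $G$ is weakly $K_{s,t}$-saturated and $\mathrm{wsat}(s+t,K_{s,t})\leqslant\binom{s+t-1}{2}+1$.
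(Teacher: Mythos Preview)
Your proposal is correct and follows essentially the same approach as the paper: both use the graph $G=\overline{P_{s+t-1}\sqcup K_1}$ and the same three-phase ordering (cut at $u_su_{s+1}$, peel the left segment, then peel the right segment), with the same partitions certifying each step. Your presentation is slightly more narrative while the paper writes down explicit formulas for the $e_i$ and $A_i$, and you add an explicit $K_{s,t}$-freeness check and a (harmless, unnecessary) separate treatment of $s=1$, but the substance is identical.
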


\begin{proof}
We prove  that $ G= \overline{ P_{s+t-1}\sqcup K_1}  $ is weakly $ K_{s,t} $-saturated.	
Denote by  $v_1, \ldots, v_{s+t-1}$    the
vertices of $P_{s+t-1}$ going in the natural order of the path
and set $V(K_1)=\{v_{s+t} \}$. Let $ e_1=v_{s}v_{s+1} $, $ e_i=v_{i-1}v_i $ for $i=2, \ldots,   s $, and
$ e_i=v_iv_{i+1} $ for $i= s+1,  \ldots,  s+t-2 $. We claim  that  $   e_1, \ldots, e_{s+t-2} $ is an  order in which the weakly $ K_{s,t} $-saturation
process occurs.
Let $H_0=G$ and       $ H_i=G+\{e_1, \ldots, e_i\} $ for $i= 1, \ldots,  s+t-2 $. In order to prove the assertion, we find a
partition $\{A_i, B_i\}$ of $ V(G) $ such that   $|A_i|=s$,  $|B_i|=t$, and  $ e_{i+1} $ is the only missing edge between $ A_i $ and $ B_i $ in $ H_i $  for $i=0, 1, \ldots,  s+t-3 $. To do this, it is enough to   introduce $A_0, A_1, \ldots, A_{s+t-3}$.
Let
$A=\{v_1, \ldots, v_{s}\} $. Now, set    $ A_0=A $,
$A_i=(A\setminus \{v_i\})\cup\{v_{s+t}\}$  for $i=1, \ldots,   s-1 $, and
$A_i=(A\setminus \{v_1\})\cup\{v_{i+1}\}$  for $i= s,  \ldots,  s+t-3 $.
\end{proof}

\begin{lemma}\label{up-gcd=1}
Let $s$ and $t$ be   positive integers with  $\gcd(s,t)=1$. Then,  $\mathrm{wsat}(s+t, K_{s,t})\leqslant\binom{s+t-1}{2}$.
\end{lemma}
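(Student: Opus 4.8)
The plan is to prove the bound by exhibiting a single explicit weakly $K_{s,t}$-saturated graph attaining it, namely the complement of a Hamiltonian path. Set $N=s+t$ and let $G=\overline{P_N}$, where the vertices of $P_N$ are $v_1,\ldots,v_N$ in the natural order. Since $P_N$ has $N-1$ edges, $G$ has $\binom{N}{2}-(N-1)=\binom{N-1}{2}$ edges, so it suffices to check that $G$ is weakly $K_{s,t}$-saturated. Two things must be verified: that $G$ itself contains no copy of $K_{s,t}$ (as $|V(G)|=s+t$, every such copy would be spanning), and that the $N-1$ missing edges, namely the edges of the path, can be added one at a time so that each addition completes a new $K_{s,t}$.

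The idea I would use is to read everything on a cycle. Regard the indices modulo $N$ and think of $v_1,\ldots,v_N$ as placed on a cycle; then the edges of $P_N$ are exactly the cyclically consecutive pairs $v_iv_{i+1}$ with $1\leqslant i\leqslant N-1$, while the one cyclically consecutive pair that is \emph{present} in $G$ is the ``wrap'' pair $v_Nv_1$. For $m=0,1,\ldots$ let $A_m=\{v_{ms+1},\ldots,v_{ms+s}\}$ be the cyclic arc of length $s$ (indices read modulo $N$) and let $B_m=V(G)\setminus A_m$, so that $\{A_m,B_m\}$ is a partition with $|A_m|=s$ and $|B_m|=t$. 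A cyclic arc of length $s$ has exactly two cyclically consecutive crossing pairs, namely its endpoints $v_{ms}v_{ms+1}$ and $v_{ms+s}v_{ms+s+1}$; every other crossing pair is non-consecutive, hence not an edge of $P_N$ and therefore already present in $G$. I would add the edges in the order dictated by sliding this arc: at step $k=m+1$ use the partition $\{A_m,B_m\}$ and add the leading boundary edge $v_{ms+s}v_{ms+s+1}$.

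The key point, and the place where $\gcd(s,t)=1$ enters, is the bookkeeping of which boundary edge is missing. The trailing boundary of $A_m$ coincides with the leading boundary of $A_{m-1}$, that is, the edge added at the previous step (for $m=0$ it is the wrap pair $v_Nv_1$, already in $G$); so at every step exactly one of the two boundary pairs is still missing, and adding it creates a copy of $K_{s,t}$ on $\{A_m,B_m\}$. It then remains to see that running $m=0,1,\ldots,N-2$ adds \emph{all} $N-1$ path edges: the edge added at step $k=m+1$ sits at cyclic position $(m+1)s\bmod N$, and since $\gcd(s,N)=\gcd(s,t)=1$ the values $s,2s,\ldots,(N-1)s$ run through all nonzero residues modulo $N$ exactly once, so every path edge is produced exactly once while the wrap position $0\equiv N$ is never used. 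The same counting gives $K_{s,t}$-freeness of $G$: any $s$-subset $A$ splits the cycle into at least one arc, producing at least two cyclically consecutive boundary pairs, of which at most one is the wrap pair, so some edge of $P_N$ always crosses $\{A,B\}$. The main obstacle is exactly this global consistency — guaranteeing simultaneously that each step leaves precisely one missing boundary edge and that the induced rotation visits every path edge — which is what coprimality buys; when $\gcd(s,t)>1$ the rotation splits into several orbits and the argument breaks down, in agreement with the disconnectedness forced by Lemma \ref{complement}.
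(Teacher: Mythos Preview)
Your proposal is correct and uses the same witness graph $G=\overline{P_{s+t}}$ as the paper, but the verification that $G$ is weakly $K_{s,t}$-saturated is genuinely different. The paper argues by induction on $s+t$: after adding $v_sv_{s+1}$, it observes that the induced graph on $B=\{v_{s+1},\ldots,v_{s+t}\}$ is $\overline{P_t}$, applies the induction hypothesis to run a weakly $K_{s,t-s}$-saturation process there (lifting each step to a $K_{s,t}$ step by adjoining $A=\{v_1,\ldots,v_s\}$ to the $(t-s)$-side), and finally adds the remaining edges inside $A$ one by one. This is essentially the Euclidean algorithm in disguise, and the coprimality hypothesis is carried along implicitly through the recursion $(s,t)\rightsquigarrow(s,t-s)$.

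Your sliding-arc argument avoids induction entirely: you give a single explicit ordering $v_{s}v_{s+1},\,v_{2s}v_{2s+1},\ldots$ (indices modulo $N$) and check in one shot that each step has exactly one missing cross edge and that all path edges are hit. The coprimality enters once, cleanly, as the condition that $k\mapsto ks\bmod N$ is a cyclic permutation of the nonzero residues. This makes the role of $\gcd(s,t)=1$ completely transparent and gives a shorter, more self-contained proof. The paper's inductive approach, on the other hand, has the virtue of revealing a recursive structure that might generalise, but for this specific lemma your argument is more elegant. One tiny caveat: the sentence ``the one cyclically consecutive pair that is present in $G$ is the wrap pair $v_Nv_1$'' is literally false when $N=2$ (there is only one pair and it is the path edge), but that case is trivial and does not affect the argument.
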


\begin{proof}
We prove  that $ G=\overline{P_{s+t}}$ is weakly $ K_{s,t} $-saturated.
We proceed by  induction on $ s+t $. The assertion clearly holds for $ s+t=2 $. Let $ s+t\geqslant 3 $  and denote by  $v_1, \ldots, v_{s+t}$    the
vertices of $P_{s+t}$ going in the natural order of the path.
Partition $ V(G) $ into two subsets  $ A=\{v_1, \ldots, v_{s}\} $ and $ B=\{v_{s+1}, \ldots, v_{s+t}\} $.
Since the edge $ e=v_sv_{s+1} $ is
the only missing edge between $ A $ and $ B $ in $ G $, we   may consider $e$   as
the first element in an    ordering  of  $E(\overline{G})$ in a   weakly $ K_{s,t} $-saturation process on $G$.  For the sake of convenience, let $G'=G+e$  and without loss of generality, assume that $t\geqslant s$.
Using the definition,   in each step of a weakly $ K_{s, t-s} $-saturation  process on $ G'[B] $, there is   a partition $\{C,  D\}$ of $B$  such that  $ |C|=s $, $ |D|=t-s $,  and all edges between $ C $ and $ D $ are present    except   exactly one.
Since there is no edge between $A$ and $B$ in $\overline{G'}$,
every   step of a weakly $ K_{s, t-s} $-saturation process on $ G'[B] $ corresponding to a vertex  partition $\{C,  D\}$
can be considered as a  step of a   weakly $ K_{s,t} $-saturation process  on $G'$   corresponding to the vertex   partition    $ \{C, A\cup D \}$.
Hence, by the induction hypothesis,   $ G'[B] $  may be completed to reach to   $ K_t $ through a  weakly $ K_{s, t} $-saturation process.
Thus, it remains  to show that $ G''=G'+\{v_iv_{i+1} \, | \,  s+1\leqslant i \leqslant s+t-1\} $ is weakly $ K_{s,t} $-saturated.
For   $  i=1, \ldots,  s-1 $,  the edge $ e_i=v_iv_{i+1} $ is the only missing edge between
$\{v_1, \ldots, v_i\} \cup\{ v_{s+1}, \ldots, v_{2s-i}\} $ and $ \{v_{i+1}, \ldots, v_{s}\} \cup\{v_{2s-i+1}, \ldots, v_{s+t}\} $ in $ G'' $ and therefore   we may add $e_i$ to $G''$ in the   weakly $ K_{s,t} $-saturation process.
\end{proof}

We end this section  by pointing out that    Theorem \ref{Kst} is immediately   concluded from Corollary \ref{lower-I-II},  Lemma \ref{up-general},  and  Lemma \ref{up-gcd=1}.

\section{Determination of $\boldsymbol{\mathrm{wsat}(n, K_{2,t})}$}\label{aval123}

In this section, we  establish   Theorem \ref{main.thm} which is a direct consequence  of   Lemmas \ref{lowerr}, \ref{generallower},  and \ref{event}.
The following lemma is  known, although it seems that it  is not  explicitly stated anywhere.
We include a proof here for the sake of completeness.

\begin{lemma}\label{pendent}
Let $F$ be a graph with  $\delta(F)\geqslant1$    and let $G$ be a weakly $F$-saturated graph such that $|V(G)|\geqslant |V(F)|-1$.  		Join  a new vertex $v$    to $\delta(F)-1$ arbitrary vertices of $G$. Then,  the resulting graph   is  also   weakly $F$-saturated.
\end{lemma}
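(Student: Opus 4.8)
The plan is to establish two facts about the graph $G'$ obtained from $G$ by adding the new vertex $v$ joined to a set $S\subseteq V(G)$ of $\delta(F)-1$ vertices: that $G'$ contains no copy of $F$, and that the non-edges of $G'$ can be ordered so that each addition completes a new copy of $F$. The $F$-freeness is immediate from a degree count: every vertex of $F$ has degree at least $\delta(F)$, whereas $\deg_{G'}(v)=\delta(F)-1$, so $v$ cannot be the image of any vertex under an embedding of $F$. Hence any copy of $F$ in $G'$ would already lie inside $G$, contradicting that $G$ is weakly $F$-saturated and therefore $F$-free.

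For the saturation order I would proceed in two stages. First, I would ignore $v$ and run the given weakly $F$-saturation process on $G$, adding the non-edges of $G$ one at a time. Each such step creates a copy of $F$ inside $V(G)$, and since $V(G)\subseteq V(G')$ these remain valid copies in $G'$; after this stage the induced subgraph on $V(G)$ is the complete graph on $n:=|V(G)|$ vertices, while $v$ still has exactly its original neighbor set $S$.

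In the second stage I would add the remaining non-edges, which are precisely the pairs $vw$ with $w\in V(G)\setminus S$, in arbitrary order. To handle one such edge $vw$, fix a vertex $u_0$ of $F$ with $\deg_F(u_0)=\delta(F)$ and build an embedding of $F$ into the current graph by sending $u_0\mapsto v$, sending one neighbor of $u_0$ to $w$ and the other $\delta(F)-1$ neighbors of $u_0$ to any already-present neighbors of $v$, and finally sending the remaining $|V(F)|-1-\delta(F)$ vertices of $F$ to distinct unused vertices of $V(G)$. Every edge of $F$ incident to $u_0$ then maps to an edge at $v$ that is present (an original edge, an edge added earlier in this stage, or the new edge $vw$), while every other edge of $F$ maps into $V(G)$, which is complete after the first stage; hence this is a genuine copy of $F$ through $vw$. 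Iterating exhausts all missing edges at $v$ and completes $G'$ to $K_{n+1}$.

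The crux of the argument is the embedding in the second stage, and the one place where the hypothesis is used is in checking that the leftover $|V(F)|-1-\delta(F)$ vertices of $F$ can be accommodated: after removing the $\delta(F)$ images of the neighbors of $u_0$ there remain $n-\delta(F)$ vertices of $V(G)$, so the embedding fits exactly when $|V(F)|-1-\delta(F)\leqslant n-\delta(F)$, that is, when $n\geqslant|V(F)|-1$. I expect the main subtlety to be bookkeeping rather than any deep difficulty: one must simultaneously keep $\deg_{G'}(v)$ just below $\delta(F)$ so as to kill every copy of $F$ before the process begins, yet ensure that once the new edge $vw$ is inserted the vertex $v$ has its full complement of $\delta(F)$ neighbors available to serve as the image of $u_0$.
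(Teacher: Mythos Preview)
Your proof is correct and follows essentially the same approach as the paper's: first run the given saturation process on $V(G)$ to make it complete, then add each missing edge $vw$ by realizing $v$ as a minimum-degree vertex of $F$ with its neighbors sent to $\{w\}\cup S$ and the remaining vertices of $F$ placed arbitrarily in $V(G)$. Your write-up is in fact more explicit than the paper's, since you verify the $F$-freeness of $G'$ and spell out the counting $|V(F)|-1-\delta(F)\leqslant n-\delta(F)$ that uses the hypothesis $n\geqslant|V(F)|-1$.
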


\begin{proof}
Denote the   resulting graph  by $G'$.
Since  $G$ is  weakly $F$-saturated, we   may  add
all edges in $ \{uv \in E(\overline{G'}) \, | \, u,v\in V(G)\}$  to  $G'$ in  some order  to obtain a complete subgraph of $ G'$ on $V(G)$. Let $e\in E(F)$ be incident to a vertex of degree  $\delta(F)$.  For each vertex $x\in V(G)\setminus N_{G'}(v)$,   there is a copy of  $F-e$ in $G'$ containing the vertices in  $\{v, x\}\cup N_{G'}(v)$ and so,  we may  connect   $v$ to $x$ in the weakly $F$-saturation process on $G'$. The assertion follows.
\end{proof}

The following lemma proves the upper  bounds of Theorem \ref{main.thm}.

\begin{lemma}\label{lowerr}
For every two integers $n, t$ with  $t\geqslant3$ and $n\geqslant t+2$, the following statements  hold.
\begin{itemize}
\item[{\rm (i)}] If $t$ is odd, then  $\mathrm{wsat}(n, K_{2, t})\leqslant n-2+\binom{t}{2}$.
\item[{\rm (ii)}] If $t$ is even and $n\leqslant 2t-2$, then  $\mathrm{wsat}(n, K_{2, t})\leqslant n-1+\binom{t}{2}$.
\item[{\rm (iii)}] If $t$ is even and $n\geqslant 2t-1$, then  $\mathrm{wsat}(n, K_{2, t})\leqslant n-2+\binom{t}{2}$.
\end{itemize}
\end{lemma}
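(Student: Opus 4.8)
The plan is to exhibit, in each of the three cases, an explicit weakly $K_{2,t}$-saturated graph on $n$ vertices with the desired number of edges, and then verify that a saturation ordering exists. The natural building block is a graph that already works for the initial case $n=t+2$, supplied by Theorem \ref{Kst}: since $\gcd(2,t)=1$ exactly when $t$ is odd, Theorem \ref{Kst} gives $\mathrm{wsat}(t+2,K_{2,t})=\binom{t+1}{2}$ when $t$ is odd and $\binom{t+1}{2}+1$ when $t$ is even. Note $\binom{t+1}{2}=\binom{t}{2}+t$, so for $n=t+2$ the target upper bound in (i) reads $t+\binom{t}{2}=\binom{t+1}{2}$, matching the odd case exactly, and the target in (ii) reads $t+1+\binom{t}{2}=\binom{t+1}{2}+1$, matching the even case. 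Thus the base graphs from Section \ref{span123} already realize the bound at $n=t+2$.

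First I would handle the growth in $n$ by repeatedly attaching pendant-type vertices using Lemma \ref{pendent}. Since $\delta(K_{2,t})=2$, that lemma lets me take any weakly $K_{2,t}$-saturated graph on at least $t+1$ vertices and adjoin a new vertex joined to exactly $\delta(K_{2,t})-1=1$ old vertex, preserving weak saturation and adding exactly one edge. Starting from the base graph on $t+2$ vertices and applying this $n-(t+2)$ times yields a weakly $K_{2,t}$-saturated graph on $n$ vertices whose edge count has grown by $n-t-2$. For the odd case (i) this gives $\binom{t+1}{2}+(n-t-2)=n-2+\binom{t}{2}$, and for the small even case (ii) it gives $\binom{t+1}{2}+1+(n-t-2)=n-1+\binom{t}{2}$; both are precisely the claimed bounds, so (i) and (ii) follow immediately from Theorem \ref{Kst} and Lemma \ref{pendent}.

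The genuinely new content is case (iii): when $t$ is even and $n\geqslant 2t-1$, the bound $n-2+\binom{t}{2}$ is one less than what the pendant construction from the $t+2$ base would give, so I cannot merely grow the even base graph. Instead I would construct a fresh weakly $K_{2,t}$-saturated graph directly, designed to exploit the extra room $n\geqslant 2t-1$. A natural candidate is a $K_t$ (contributing $\binom{t}{2}$ edges) together with a sparse structure on the remaining $n-t$ vertices contributing $n-2$ edges, arranged so that the first few saturation steps can be performed even though $\gcd(2,t)\neq1$; the threshold $2t-1$ should be exactly what lets two disjoint size-$(t-1)$ or size-$t$ blocks coexist so that a $K_{2,t}$ can be completed at the critical first step that the parity obstruction otherwise blocks. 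I expect the main obstacle to be finding the correct sparse attachment and, more delicately, writing down an explicit valid ordering of the added edges together with the witnessing partition $\{A_i,B_i\}$ (with $|A_i|=2$) certifying that each new edge creates a fresh copy of $K_{2,t}$; verifying that every step genuinely has all but one of its cross edges present is the crux, and is where the hypothesis $n\geqslant 2t-1$ must be used essentially rather than incidentally.
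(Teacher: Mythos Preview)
Your argument for (i) and (ii) is correct and is exactly the paper's: start from an optimal weakly $K_{2,t}$-saturated graph on $t+2$ vertices supplied by Theorem~\ref{Kst} and grow it by $n-t-2$ pendants via Lemma~\ref{pendent}, picking up one vertex and one edge at each step.

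For (iii) you have correctly diagnosed the issue---the pendant construction from the even $t+2$ base overshoots by one edge---and correctly named the remedy---a direct construction exploiting $n\geqslant 2t-1$---but you have not actually produced one. The phrase ``a $K_t$ together with a sparse structure on the remaining $n-t$ vertices'' is not a graph, and you yourself flag the saturation ordering and the witnessing bipartitions as the crux that remains to be done. That crux is the entire content of (iii). The paper closes this gap not by inventing something new but by invoking the explicit graph $\mathbbmsl{G}_{n,t}$ of Figure~\ref{figM1}, already introduced in \cite{cui,Kronenberg}: take $K_{t+1}$ on a vertex set $\{a,b\}\cup W$ with $|W|=t-1$, delete the edge $ab$, attach one pendant to $a$, and attach $n-t-2$ further pendants to a fixed vertex of $W$; this has exactly $n-2+\binom{t}{2}$ edges, and its weak $K_{2,t}$-saturation for $n\geqslant 2t-1$ is handled by (an argument analogous to) Proposition~14 of \cite{Kronenberg}. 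Your intuition is pointed in the right direction---$\mathbbmsl{G}_{n,t}$ does contain a $K_t$---but until you write down a concrete graph and verify a saturation ordering, the proposal for (iii) remains a plan rather than a proof.
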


\begin{proof}
Let $H$ be a weakly $K_{2,t}$-saturated graph of order   $t+2$  with  $\mathrm{wsat}(t+2, K_{2,t})$ edges. Attach  $n-t-2$ pendent vertices to  an  arbitrary vertex   of $H$ to obtain   a  graph $G$ of order $n$. By Lemma \ref{pendent}, $G$ is a weakly $K_{2,t}$-saturated graph with $n-t-2+\mathrm{wsat}(t+2, K_{2,t})$ edges. Parts  (i) and  (ii) follow from Theorem \ref{Kst}.
The graph  $\mathbbmsl{G}_{n, t}$,  depicted in Figure \ref{figM1} and  introduced in \cite{cui, Kronenberg},  is   weakly $K_{2,t}$-saturated  for    $n\geqslant 2t-1$.    This can be  proved  by using a proof similar to that of    Proposition 14 in \cite{Kronenberg}.
Since   $\mathbbmsl{G}_{n, t}$  has $n$ vertices and  $n-2+\binom{t}{2}$ edges,   (iii) follows.
\end{proof}

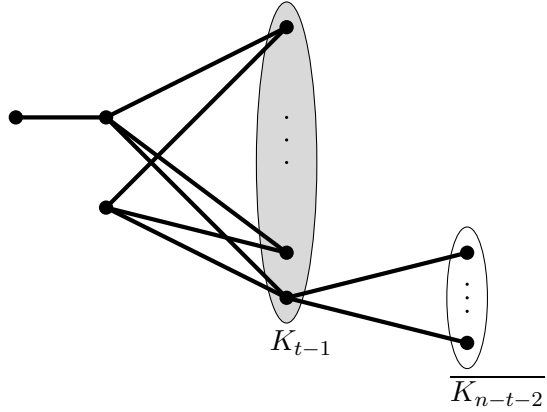
\begin{figure}[H]
\begin{center}
\begin{tikzpicture}[line cap=round,line join=round,>=triangle 45,x=1.0cm,y=1.0cm,scale=0.6]
\draw [rotate around={90:(2,-8)},line width=0.4pt,color=black,fill=gray!30] (2,-8) ellipse (3.56cm and 0.67cm);
\draw [rotate around={90:(6,-11)},line width=0.4pt,color=black] (6,-11) ellipse (1.57cm and 0.45cm);
\draw (1.4,-11.5) node[anchor=north west]{$K_{t-1}$};
\draw (5.4,-12.5) node[anchor=north west]{$\overline{K_{n-t-2}}$};
\clip(-4.61,-13.67) rectangle (7.58,-4.33);
\draw [line width=1.6pt] (-4,-7)-- (-2,-7);
\draw [line width=1.6pt] (-2,-7)-- (2,-5);
\draw [line width=1.6pt] (-2,-9)-- (2,-5);
\draw [line width=1.6pt] (-2,-9)-- (2,-10);
\draw [line width=1.6pt] (-2,-9)-- (2,-11);
\draw [line width=1.6pt] (-2,-7)-- (2,-10);
\draw [line width=1.6pt] (-2,-7)-- (2,-11);
\draw [line width=1.6pt] (2,-11)-- (6,-10);
\draw [line width=1.6pt] (2,-11)-- (6,-12);
\begin{scriptsize}
\fill [color=black] (-2,-7) circle (4.5pt);
\fill [color=black] (-4,-7) circle (4.5pt);
\fill [color=black] (-2,-9) circle (4.5pt);
\fill [color=black] (2,-5) circle (4.5pt);
\fill [color=black] (2,-11) circle (4.5pt);
\fill [color=black] (2,-10) circle (4.5pt);
\fill [color=black] (6,-10) circle (4.5pt);
\fill [color=black] (6,-12) circle (4.5pt);
\fill [color=black] (2,-7) circle (1.0pt);
\fill [color=black] (2,-8) circle (1.0pt);
\fill [color=black] (2,-7.5) circle (1.0pt);
\fill [color=black] (6,-10.7) circle (1.0pt);
\fill [color=black] (6,-11) circle (1.0pt);
\fill [color=black] (6,-11.3) circle (1.0pt);
\end{scriptsize}
\end{tikzpicture}
\caption{The graph  $\mathbbmsl{G}_{n, t}$. We have not drawn    the edges between  the   vertices in the  gray elliptical disk.}\label{figM1}
\end{center}
\end{figure}

The following observation is trivially true. We state it   for clarity.

\begin{observation}\label{substit}
Fix a graph $F$ and let   $G, H$ be two graphs with  the same vertex set. Assume that  the graph   obtained from $H$ by adding   a sequence of edges in a weakly $F$-saturation process contains $G$ as a subgraph. If $G$  is weakly $F$-saturated, then so is $H$.
\end{observation}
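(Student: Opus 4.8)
The plan is to build a complete weakly $F$-saturation process on $H$ by concatenating two pieces: the given process that drives $H$ up to a supergraph of $G$, and then a filtered copy of the saturation process that witnesses $G$ being weakly $F$-saturated. Write $H'$ for the graph obtained from $H$ after performing the given sequence of additions, so that $E(H)\subseteq E(H')$ and, by hypothesis, $E(G)\subseteq E(H')$. The first piece of the process, namely the additions that turn $H$ into $H'$, is legitimate by assumption, since each of those edges was added in a weakly $F$-saturation process. It therefore remains to insert the edges of $E(\overline{H'})$ one at a time, each creating a new copy of $F$, until $K_n$ is reached.

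For the second piece I would recycle the ordering coming from $G$. Fix an ordering $g_1,g_2,\ldots$ of $E(\overline{G})$ realizing a weakly $F$-saturation process on $G$. Because $E(G)\subseteq E(H')$, every non-edge of $H'$ is a non-edge of $G$, so the members of $E(\overline{H'})$ occur as a subsequence of $g_1,g_2,\ldots$; I would insert them into $H'$ in exactly that relative order. The one step that needs checking is that each such insertion still produces a copy of $F$. When an edge $g_j\in E(\overline{H'})$ is added, in the original $G$-process it lies in a copy of $F$ all of whose edges belong to $G+\{g_1,\ldots,g_j\}$. I would verify that every edge of that copy is already present in the current graph: an edge of $G$ is present because $G\subseteq H'$, while an edge $g_{j'}$ with $j'<j$ is present because it is either already an edge of $H'$ or a non-edge of $H'$ that, being earlier in the ordering, has already been inserted. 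Hence the identical copy of $F$ reappears and contains $g_j$, so the step is valid.

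Concatenating the two pieces produces an ordering of all of $E(\overline{H})$ along which $H$ is driven to $K_n$ with a fresh copy of $F$ created at every step, which is precisely what is required for $H$ to be weakly $F$-saturated. I do not anticipate a real obstacle here: the entire content is the monotonicity remark that a copy of $F$ manufactured inside $G$'s process survives inside the larger graph $H'$, so no copy is ever lost when passing from $G$ to its supergraph. The only point deserving a word of care is the requirement that a weakly $F$-saturated graph be $F$-free; this causes no difficulty for the completion argument above and is preserved under the hypotheses in the situations where we invoke the observation.
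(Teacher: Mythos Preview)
Your argument is correct and is precisely the natural justification for this observation: concatenate the given process $H\to H'$ with the subsequence of $G$'s saturation ordering consisting of the edges still missing from $H'$, using the monotonicity of ``creates a copy of $F$'' under passing to supergraphs. The paper itself provides no proof at all---it simply declares the observation ``trivially true'' and states it ``for clarity''---so there is nothing to compare against; you have supplied exactly the details one would expect. Your closing remark about the $F$-freeness requirement is well observed: strictly speaking the conclusion ``$H$ is weakly $F$-saturated'' presupposes that $H$ is $F$-free, which is not part of the stated hypotheses, but in every application within the paper $H$ arises as an edge-rearrangement of an $F$-free graph and is itself $F$-free, so no difficulty arises.
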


The following lemma establishes  a general  lower     bound on    $\mathrm{wsat}(n, K_{2, t})$.

\begin{lemma}\label{generallower}
Let $t\geqslant3$ and $n\geqslant t+2$. Then,  $\mathrm{wsat}(n, K_{2, t})\geqslant n-2+\binom{t}{2}$.
\end{lemma}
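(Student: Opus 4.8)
The plan is to induct on $n$. For the base case $n=t+2$, Theorem \ref{Kst} applied with $s=2$ gives $\mathrm{wsat}(t+2,K_{2,t})\geqslant\binom{t+1}{2}=\binom{t}{2}+t=(t+2)-2+\binom{t}{2}$, irrespective of the parity of $t$ (when $t$ is even the true value exceeds this by $1$, which only helps). For the inductive step, suppose $n\geqslant t+3$ and let $G$ be a weakly $K_{2,t}$-saturated graph on $n$ vertices with exactly $\mathrm{wsat}(n,K_{2,t})$ edges. I would reduce from $n$ to $n-1\geqslant t+2$ by locating a vertex $v$ for which $G-v$ is again weakly $K_{2,t}$-saturated; granting this, the induction hypothesis yields
$$e(G)=e(G-v)+\deg_G(v)\geqslant\Big((n-1)-2+\tbinom{t}{2}\Big)+\deg_G(v),$$
and since $\deg_G(v)\geqslant1$ this is at least $n-2+\binom{t}{2}$, which is the desired bound.

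Two points then require justification. First, $\delta(G)\geqslant1$: an isolated vertex can never acquire an edge during a saturation process, because $\delta(K_{2,t})=2$ forces any vertex lying in a certifying copy of $K_{2,t}$ to have degree at least $2$ there, so an isolated vertex cannot be an endpoint of the edge that creates such a copy. Hence $G$ has no isolated vertex and deleting any vertex removes at least one edge, which is what powers the inequality above. Second, and crucially, I must exhibit a vertex whose deletion preserves weak saturation. The natural candidate is a vertex $v$ of minimum degree. To test whether $G-v$ is weakly $K_{2,t}$-saturated, I would fix a saturation order $e_1,e_2,\dots$ of $E(\overline{G})$ coming from a saturation process on $G$ and pass to the subsequence of those $e_i$ with both endpoints in $V(G)\setminus\{v\}$, hoping this subsequence certifies $G-v$. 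Every such $e_i$ whose original certifying copy of $K_{2,t}$ avoids $v$ is certified inside $G-v$ by that same copy, since all of the copy's remaining edges appeared earlier in the process and none of them meets $v$.

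The \textbf{main obstacle} is precisely the edges $e_i$ of this subsequence whose certifying copy passes through $v$: deleting $v$ destroys the certificate. In such a copy, $v$ lies in the part of size $2$ or the part of size $t$, and I would seek to re-route the copy so as to avoid $v$ — for instance, when $v$ sits in the size-$t$ part, by replacing it with another common neighbour of the two vertices of the size-$2$ part. This forces me to control the common neighbourhoods $N_G(x,y)$ throughout the process, since a replacement vertex exists exactly when these two vertices already have more than the bare minimum of $t-1$ common neighbours at that step. An alternative route is to show that $v$'s incident edges can be deferred to the very end of the process, so that $v$ is never used while $K_{n-1}$ is being completed on $V(G)\setminus\{v\}$; Observation \ref{substit} should be the right tool for patching such a reordered, projected process back into a genuine saturation order. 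I expect the combined task of choosing $v$ and proving that all copies through it can be re-routed or deferred to be the delicate heart of the argument, and the even/odd dichotomy in Theorem \ref{main.thm} strongly suggests that both the admissible choice of $v$ and the available re-routings depend on the parity of $t$.
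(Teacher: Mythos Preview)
Your proposal is not a proof: it is an outline that openly leaves its central step unproven. The base case via Theorem~\ref{Kst} is fine, and the inequality $e(G)\geqslant e(G-v)+\deg_G(v)\geqslant (n-3)+\binom{t}{2}+1$ is of course correct \emph{once} you have produced a vertex $v$ with $G-v$ weakly $K_{2,t}$-saturated. But you never establish that such a $v$ exists; you only describe heuristics (``re-route the copy'', ``defer $v$'s edges'') and say you ``expect'' this to be delicate. That expectation is well founded: the nearest result in the paper, Lemma~\ref{degone}, handles only the very special situation of a degree-one vertex under the extra hypothesis $n\leqslant 2t-2$, and even that takes a full page of equivalence-class analysis. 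For a general minimum weakly $K_{2,t}$-saturated graph there is no reason to believe a saturation order can be projected to $V(G)\setminus\{v\}$ simply by rerouting copies that use $v$: when $v$ lies in the size-$2$ part of the certifying $K_{2,t}$, the rerouting would require another vertex sharing $t$ neighbours with the partner of $v$ at that moment of the process, and nothing you have said controls this. Your closing remark about parity is also misplaced here: the bound of Lemma~\ref{generallower} is parity-free; the even/odd split only enters in the sharper Lemma~\ref{event}.

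The paper's argument is entirely different in spirit and does not proceed by deleting a vertex. Instead it \emph{enlarges} the graph, attaching $t^3$ pendants to every vertex via Lemma~\ref{pendent}, and then runs an iterative ``process'' that gradually builds a highly structured set $A\subseteq V(G)$, maintained through properties (P.1)--(P.10), by repeatedly locating pairs $a,b$ with $|N_G(a,b)|\geqslant t-1$ and rewiring black/red edges so that the black subgraph stays weakly $K_{2,t}$-saturated with the same edge count. The process halts in one of four states (T.1)--(T.4), and an explicit edge count (equation~\eqref{e(G)}) combined with a case analysis of each termination state yields the bound. This machinery is what replaces the ``find a removable vertex'' step you are missing; there is no shortcut in the paper, and your inductive scheme, as written, does not supply one either.
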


\begin{proof}
Let $ G_0 $ be a weakly $ K_{2,t} $-saturated graph. So,   $ G_0 $ is connected.
By Lemma \ref{pendent}, if we attach a new pendent vertex to a vertex of $ G_0 $, then the resulting graph is also weakly $ K_{2,t}$-saturated
whose number of vertices  is one more  than the number of vertices of $G_0$  and whose number of edges is one more  than the number of edges of $G_0$.
We attach $ t^3$ new pendent vertices to each vertex of $ G_0 $ and we call the resulting graph by $ G$. To prove the assertion, it suffices to show that
$e(G)\geqslant n-2+\binom{t}{2}$ provided  $ n=|V(G)|$.

We define a process in which step, $ G $ is updated so that a special structure on $ G $  is preserved.     In each step of the  process, $ G $ looks as follows. The graph $ G$ contains $ G_0$ as a subgraph.
The edges of $ G $ are colored  by two colors    black and red. At the beginning of the    process,  all edges are black  and the number of black edges does not change during the process.  The  spanning subgraph of $ G $ induced on   black edges is   connected.
There exist     two disjoint subsets $ A$  and   $B $ of $V(G)$ which  are  equipped with  the following features.
There is an ordering  on  $ A $ under which   the vertices  in  $A$ can be arranged  as   $ x_1, \ldots, x_k $, where $ k=|A| $.
Every red edge has at least one  endpoint in $ A $.
There exist the     partition $\{A_{1}, \ldots, A_{m}\}$ of $A$ and  the  partition  $\{B_{1}, \ldots, B_{m}\}$ of $B$ which   are  described below.  Let $i\in\{1, \ldots, m\}$ and denote by $ x_{s_{i}} $ the first element among  $ x_1, \ldots, x_k $ which  appears within $ A_{i}$.
The following properties will be held  in each step of the  process.
\begin{itemize}[label=*****, wide=0pt, leftmargin=*]
\item[(P.1)] For every two  vertices  $ x, y\in A_i $, we have $ N_{G}(x)\setminus\{y\}=N_{G}(y)\setminus \{x\} $.
\item[(P.2)] The set $A_i$ is either a clique of size at least $2$  or an independent set of size $2$ or  $3$.
\item[(P.3)] Every edge between $x_{s_i}$ and $A_i\setminus\{x_{s_i}\}$ is black   whenever  $A_i$ is   a clique.
\item[(P.4)]  Every   vertex in $ A_i $ is adjacent to every vertex in $ A_j $  whenever $i\neq j$.
\item[(P.5)]  Each  edge between    $ x_{s_i} $  and $V(G)\setminus A$ is black.
\item[(P.6)] Every    vertex $x_r \in A_i$ is adjacent to every  vertex   $ x_{s_j} $  by a black edge   if   $i\neq j$ and $s_j<r$.
\item[(P.7)]  The size of   $ B_{i}$ is $t-s_{i}$.
\item[(P.8)]  For any  vertex $ x\in B_i $, we have $ N_{G}(x)=A_i $.
\item[(P.9)]  Any vertex $ x_r\in A_{i} $ is adjacent to exactly $ \alpha_r $ vertices in $ B_{i} $ by black edges, where
\begin{eqnarray*}
\alpha_r=\left\{
\begin{array}{ll}
t-r &  \quad  \, \, \,  \text{if }   r=s_i,\\ \vspace{-3mm} \\
t-r+2 & \quad   \begin{array}{l}\text{if $A_{i}$ is an independent set of size} \\ \text{3 and  $x_r$  is the third element of  $A_i$,}\end{array}\\ \vspace{-3mm} \\
t-r+1 & \quad  \, \, \,   \text{otherwise}.
\end{array}\right.
\end{eqnarray*}
\item[(P.10)] We have $s_1<\cdots<s_m$.
\end{itemize}
The configuration described  above is designed so that at each step  of the process, the graph induced on    black edges is  weakly $K_{2,t}$-saturated,
and      red edges  are  the ones that are added    through     the  weakly $K_{2,t}$-saturation process.
This will be  shown in Cases \ref{Case I}, \ref{Case II}, and \ref{Case III} below. Actually, the following statement preserved during the process:
For every two vertices $x,y\in V(G)$ with $|N_G(x,y)|\geqslant t-1$, there are at least $t-1$ vertices in $N_G(x,y)$ are all adjacent to both $x$ and $y$ by black edges
as well as any  vertex in $N_G(x,y)$ is adjacent to at least one of  $x$ and  $y$ by a black edge.

At the beginning of the    process,  all edges are black and    $A=B=\varnothing$. In  Cases \ref{Case I}, \ref{Case II}, and \ref{Case III},   we   explain       how in each step of the  process we update $ G$ and $ A, B, C$ to proceed to the next step, where $ C=V(G)\setminus (A\cup B)$.
In each step of the process, at most two vertices will be added to $A$.
So, we may repeat the  process until $k\leqslant t+1$. More precisely, the process will be terminated whenever one of the following occurs.
\begin{itemize}[label=*****, wide=0pt, leftmargin=*]
\item[(T.1)] $k\leqslant t+1$ and $ |A_i|\geqslant t-1 $ for some $ i\in\{1, \ldots, m\} $.
\item[(T.2)] $ k=t $, $ |A_i|\leqslant t-2 $ for  $ i=1, \ldots, m$,  and there are two vertices $ u,v\in C $ such that $|N_{G}(u, v)|\geqslant t-1 $ and $ N_{G}(u)\setminus\{v\}\neq N_{G}(v)\setminus\{u\}$.
\item[(T.3)] $ k=t+1 $, $ |A_i|\leqslant t-2 $  for  $ i=1, \ldots, m$,  and  there  are two vertices  $ u, v \in C $ such that $|N_{G}(u, v)|\geqslant t-1 $ and $ N_{G}(u)\setminus\{v\}\neq N_{G}(v)\setminus\{u\}$.
\item[(T.4)] $k=t+1 $, $|A_i|\leqslant t-2 $ for  $ i=1, \ldots, m$,  and there are two vertices $ u \in A$ and $v\in C$ such that  $|N_{G}(u, v)|\geqslant t-1 $ and $ N_{G}(u)\setminus\{v\}\neq N_{G}(v)\setminus\{u\}$.
\end{itemize}

We now show what we do in each   step of  the  process before termination. At the beginning of each  step, $ G $ is weakly $ K_{2,t} $-saturated and so there are two vertices $ a, b $
such that  $ |N_{G}(a, b)|\geqslant t-1 $  and $ N_{G}(a)\setminus\{b\}\neq N_{G}(b)\setminus\{a\} $. As  (T.1)  is not   happened,  (P.8) forces that   $a,  b\notin B$.

\begin{case}\label{Case I}
$a, b\in C $.
\end{case}

\begin{proof}[Description]
As  (T.1)--(T.3) are   not   happened,    $ k\leqslant t-1 $. Set $ x_{k+1}=a $, $ x_{k+2}=b $,    $ s_{m+1}=k+1$,  and $ A_{m+1}=\{x_{k+1}, x_{k+2}\} $.
Suppose  that   $ x_{k+1}$ or $ x_{k+2} $ is not adjacent to $ x_{s_i} $ for some $ i\in \{1, \ldots, m\} $. We find from (P.1) that
$ N_{G}(x_{k+1}, x_{k+2})\cap A_i =\varnothing$. Therefore,
$|N_{G}(x_{k+1}, x_{k+2})\setminus A|\geqslant t-1-|N_{G}(x_{k+1}, x_{k+2})\cap A| \geqslant  t-1 -(k-|A_i|)\geqslant |A_i| \geqslant 2$.
So, we may   remove two arbitrary edges between   $ x_{k+2} $  and   $ N_{G}(x_{k+1}, x_{k+2})\setminus A $ and join both  $ x_{k+1} $ and $ x_{k+2} $ to $ x_{s_i} $ by black
edges and to all vertices in $ A_i\setminus \{x_{s_i}\}$ by red edges.
By repeating this, we derive  that   $A\subseteq N_{G}(x_{k+1}, x_{k+2})$ and
$|N_{G}(x_{k+1}, x_{k+2})\setminus A|\geqslant t-1-k$. Thus, (P.4) and (P.6) hold yet if $A_{m+1}$ is added to  $\{A_1, \ldots, A_m\}$.
We    remove $ t-1-k$ arbitrary  edges   between    $ x_{k+2} $  and   $ N_{G}(x_{k+1}, x_{k+2})\setminus A $ and
connect $ x_{k+2} $ to  all vertices in a    subset  $B_{m+1}$ consisting  of   $ t-1-k$   arbitrary    pendent vertices   in $ N_{G}(x_{k+1})$. Hence, (P.7)--(P.9) hold yet if $B_{m+1}$ is added to  $\{B_1, \ldots, B_m\}$.

Now, update $ A $ to $ A\cup A_{m+1} $ with the partition $ \{A_1, \ldots, A_{m+1}\} $ and update $ B $ to $ B\cup B_{m+1}$ with the partition $ \{B_1, \ldots, B_{m+1}\} $.
\renewcommand{\qedsymbol}{$\da$}
\end{proof}

\begin{case}\label{Case II}
$a\in A_i $ for some $ i \in  \{1, \ldots,  m\}$ and $ b\in C$.
\end{case}

\begin{proof}[Description]
Since (T.1) and (T.4)  are    not   happened,   $ k\leqslant t $.
In view of  (P.1) and  without loss of generality,  we may assume that $ a=x_{s_i} $.
Let $ x_{k+1}=b $.

First, assume that $ A_i\cup \{x_{k+1}\} $ is an independent set of size $ 3 $.
Suppose that  $ x_{k+1} $ is not adjacent to $ x_{s_j} $ for some $ j\in \{1, \ldots,  m\}\setminus \{i\} $.
We obtain from (P.1) that
$ N_{G}(x_{s_i}, x_{k+1})\cap A_j =\varnothing$. Therefore,
$|N_{G}(x_{s_i}, x_{k+1})\setminus A|\geqslant t-1-|N_{G}(x_{s_i}, x_{k+1})\cap A|
\geqslant t-1 -(k-|A_j|)\geqslant |A_j|-1\geqslant 1$.
So, we may  remove an  arbitrary  edge  between  $ x_{k+1} $   and   $ N_{G}(x_{s_i}, x_{k+1})\setminus A $ and join $ x_{k+1} $ to $ x_{s_j} $ by a black edge and to  all vertices in  $ A_j\setminus \{x_{s_j}\} $ by red edges. By repeating this, we  derive that
$A\setminus A_i \subseteq N_{G}(x_{s_i}, x_{k+1})$ and
$ |N_{G}(x_{s_i}, x_{k+1})\setminus A|\geqslant t-1 -(k-|A_i|)\geqslant  t-k+1 $.
Thus, (P.4) and (P.6) hold yet if  $x_{k+1}$ is added to $A_i$.
We now  remove $ t-k+1 $ arbitrary edges   between    $ x_{k+1} $     and   $ N_{G}(x_{s_i}, x_{k+1})\setminus A $ and
connect $ x_{k+1} $ to $ t-k+1 $ arbitrary distinct vertices in $ B_i $ by black edges. This is possible, since $ |B_i|\geqslant t-k+1$ by (P.7) and using  $s_i\leqslant k-1 $.
Hence, (P.8) and (P.9) hold  yet if  $x_{k+1}$ is added to $A_i$.

Next, assume that
$A_i\cup \{x_{k+1}\} $ is not an independent set of size $ 3 $.
Let $ x_{k+1} $ be  not adjacent to $ x_{s_j} $ for some $ j\in\{1, \ldots, m\} $.
We find from (P.1) that
$ N_{G}(x_{s_i}, x_{k+1})\cap A_j =\varnothing$. Therefore,
$|N_{G}(x_{s_i}, x_{k+1})\setminus A|\geqslant t-1-|N_{G}(x_{s_i}, x_{k+1})\cap A|
\geqslant t-1 -(k-|A_j|)\geqslant |A_j|-1\geqslant 1$. So,
we may   remove an arbitrary  edge    between   $ x_{k+1} $     and   $ N_{G}(x_{s_i}, x_{k+1})\setminus A $ and join $ x_{k+1} $ to $ x_{s_j} $ by a black edge and to all vertices in  $ A_j\setminus \{x_{s_j}\} $ by red edges.
By repeating this, we  derive  that   $ A \subseteq N_{G}(x_{k+1})$.
Hence, (P.4) and (P.6) hold yet if  $x_{k+1}$ is added to $A_i$.
If $ A_i $ is a clique, then $ A\setminus \{x_{s_i}\} \subseteq N_{G}(x_{s_i}, x_{k+1}) $.
Suppose that  $ A_i $ is  an independent set. It follows from $ A \subseteq N_{G}(x_{k+1})$ that
$N_{G}(x_{s_i}, x_{k+1})\cap A= A\setminus A_i$ and hence
$|N_{G}(x_{s_i}, x_{k+1})\setminus A|\geqslant t-1 -(k-|A_i|)\geqslant |A_i|-1$. We now
remove $ |A_i|-1 $ arbitrary   edges    between     $ x_{k+1} $      and   $ N_{G}(x_{s_i}, x_{k+1})\setminus A $ and join $ x_{s_i} $ to all vertices in  $A_i\setminus\{ x_{s_i}\} $ by  black edges,
resulting   in  $ A\setminus \{x_{s_i}\} \subseteq N_{G}(x_{s_i}, x_{k+1}) $. Therefore,   regardless of whether  $ A_i $ is a clique or  an independent set,
$|N_{G}(x_{s_i}, x_{k+1})\setminus A|\geqslant t-1 -(|A|-1)= t-k $.
Remove $ t-k $ arbitrary edges      between    $ x_{k+1} $      and   $ N_{G}(x_{s_i}, x_{k+1})\setminus A $ and
connect $ x_{k+1} $ to $ t-k $ arbitrary distinct vertices in $ B_i $ by  black edges. Thus, (P.8) and (P.9) hold  yet if  $x_{k+1}$ is added to $A_i$.

Now, update $ A_i $ to $ A_i\cup \{x_{k+1}\} $ and $ A $ to $ A\cup \{x_{k+1}\} $ with the partition $ \{A_1, \ldots, A_{m}\} $.
\renewcommand{\qedsymbol}{$\da$}
\end{proof}

\begin{case}\label{Case III}
$a\in A_{i} $ and $ b\in A_{j} $ for some $i,j$ with $1\leqslant  i< j \leqslant m $.
\end{case}

\begin{proof}[Description]
In view of  (P.1) and  without loss of generality,  we may assume that  $ a=x_{s_i} $ and $ b=x_{s_j} $.
Fix a vertex $ x_r\in A_j$. We know from (P.7), (P.9) and (P.10) that   $ |B_i|>\alpha_r $ and  there are exactly  $ \alpha_r $ black edges  between     $ x_r $      and  $ B_j $. If $ A_{j} $ is an independent set of size $ 3 $  and  $ x_r $ is the third element of $ A_j $, then remove $ \alpha_r-1 $ black edges    between    $ x_r $      and   $ B_j $ and connect $ x_r $ to $ \alpha_r-1 $
arbitrary  vertices in $ B_i $ by black edges. Otherwise, remove $ \alpha_r $ black edges    between    $ x_r $      and   $ B_j $ and connect $ x_r $ to $ \alpha_r $
arbitrary  vertices in $ B_i $ by black edges. Further, in both cases, connect $x_r$ to other vertices in $B_i$ by red edges.

Remove all black edges between $ x_{s_j} $ and $ x_r $  if  $ r>s_j $. The number of such edges is $ k-s_j-q $, where $ q=|A_j\setminus N_{G}[x_{s_j}]| $. Note that,
$|N_{G}(x_{s_i}, x_{s_j})\cap A|=k-p-q-2 $, where $ p=|A_i\setminus N_{G}[x_{s_i}]| $.
We have $ |N_{G}(x_{s_i}, x_{s_j})\setminus A |\geqslant t-1-(k-p-q-2)$.
Remove $ t-1-(k-p-q-2) $ black edges between $ x_{s_j} $ and $ N_{G}(x_{s_i}, x_{s_j})\setminus A $. Since
$(k-s_j-q) + (t-1-(k-p-q-2)) =p+(t-s_j)+1 $, we may  connect $ x_{s_i} $ to all vertices in $(A_i\setminus N_{G}[x_{s_i}])\cup  B_j$ and $ x_{s_j} $ to an arbitrary vertex in $ B_i $ by black edges.
Thus, (P.3) and (P.8)--(P.10) hold yet if $A_i$ is replaced by $A_i\cup A_j$ and $A_j$ is deleted from $\{A_1, \ldots, A_m\}$.

Now, update $ A_i $ to $ A_i\cup A_j $ and update $A_{\ell}, B_{\ell}$ to $A_{\ell-1}, B_{\ell-1}$, respectively, for $\ell=j+1, \ldots, m$. Then,  consider the partition $ \{A_1, \ldots,  A_{m-1}\} $ for $ A $ and the partition $ \{B_1,  \ldots,  B_{m-1}\} $ for $ B $.
\renewcommand{\qedsymbol}{$\da$}
\end{proof}

At the end of each of   Cases \ref{Case I}, \ref{Case II}, and \ref{Case III},  we do the following. In order to establish (P.1) and (P.2), for each vertex  $ w$  with   $ N_{G}(w)\cap A_i\neq \varnothing  $,  join  $ w $ to all vertices  in $ A_i\setminus N_{G}(w) $ by   red edges.
In  order to establish (P.5),    for each vertex  $ w\in C   $  with   $ N_{G}(w)\cap A_i\neq \varnothing  $,  since  there exists at least   a vertex  $ x_r\in A_i $ so  that  the color of  the  edge $wx_r$ is  black,    we may   switch the color of the   edge $wx_r$     with the color of the edge  $ wx_{s_i} $.
Note that, after doing all these   changes,  the number of black edges does not change  and the resulting graph have Properties (P.1)--(P.10). Moreover, by Observation \ref{substit}, $ G $ is still weakly $ K_{2,t} $-saturated.

Now, assume that the  process is terminated. Denote by $ G_{\mathrm{b}}$ the spanning subgraph of $ G$ induced on black edges.
So, in order to  establish  the assertion, we should   show that
$e(G_{\mathrm{b}})\geqslant n-2+\binom{t}{2}$.
For each  $i\geqslant 0$, let $C_i$ be    the set of vertices in $ C $ with   the    distance $i$  from $A$  in $  G_{\mathrm{b}}$ and
let  $\{C_1, \ldots, C_d\}$ be  a partition of $ C$.  Notice that  $C_0=A$. For  any  integer   $i\in\{1,  \ldots, d\}$ and any    vertex $ c\in C_i$,  consider    an    arbitrary edge $ e_c\in   E_{G_{\mathrm{b}}}(\{c\},  C_{i-1})$ and set $ E=\{e_c \, | \,  c\in C \}$.
Denote by  $  G_{\mathrm{c}}$  the spanning  subgraph of $  G_{\mathrm{b}} $ with $ E( G_{\mathrm{c}})=\{x_{s_i}x_r\in E( G_{\mathrm{b}})\,|\, s_i<r\}\cup  E_{ G_{\mathrm{b}}}(A, B) \cup E$.
Finally, set $F= E_{ G_{\mathrm{b}}}(C, V(G))\setminus E_{ G_{\mathrm{c}}}(C, V(G))$.
Note that, in   $  G_{\mathrm{c}}$, every $ C_i$ is an independent set and moreover,  for $ i=1,\ldots, d$,  every vertex in $ C_i$ has exactly one neighbor in $ C_{i-1}$.

Let $ \beta $ and $ \gamma $ be  respectively  the number of independent sets among $ A_1, \ldots, A_m $ of sizes $ 2 $ and $ 3 $. Also, let $\delta $ be $ 1 $ if $ k=t-1 $ and $ 0 $  otherwise.
We have
\begin{align}
e( G_{\mathrm{c}})&= e_{ G_{\mathrm{c}}}(A)+e_{ G_{\mathrm{c}}}(A,B)+e_{ G_{\mathrm{c}}}\big(C, V(G)\big)\nonumber\\[2mm]
&= \left(\sum_{r=1}^{k}\big|\{i \, | \,  s_i<r\}\big| -\beta-2\gamma\right) +\left(\sum_{r=1}^{k}\alpha_r\right) +\left(n-|A|-\sum_{i=1}^{m}|B_i|\right)\nonumber\\[2mm]
&=\left(\sum_{i=1}^{m}\big|\{r \,  |  \, r>s_i\}\big|-\beta-2\gamma\right)+ \left(\sum_{r=1}^{t}(t-r+1)-m +\gamma-\delta\right) + \left(n-k-\sum_{i=1}^{m}(t-s_i)\right)\nonumber\\[2mm]
&=\left(\sum_{i=1}^{m}(k-s_i)-\beta-2\gamma\right)+ \left(\binom{t+1}{2}-m +\gamma-\delta\right) + \left(n-k-\sum_{i=1}^{m}(t-s_i)\right)\nonumber\\[2mm]
&=\bigg(n-2+\binom{t}{2}\bigg)+\bigg((m-1)(k-t-1)-\beta-\gamma-\delta+1\bigg)\label{e(G)}.
\end{align}
We consider the termination states (T.1)--(T.4) in Cases \ref{Case T1h}, \ref{Case T2h}, \ref{Case T3h}, and  \ref{Case T4h}.
In each of these cases,    we will use   \eqref{e(G)} to  establish  that  $e( G_{\mathrm{b}})\geqslant n-2+\binom{t}{2}$.

\begin{case}\label{Case T1h}
(T.1) has happened.
\end{case}

If the second term  in
\eqref{e(G)} is nonnegative, then  there is nothing to prove. So, we may  assume   that $ (m-1)(k-t-1)-\beta-\gamma-\delta+1< 0$.  From  $ k\geqslant t-1+2(m-1)$,     $ \beta +\gamma \leqslant m$,  and    the definition  of $\delta$, we deduce that one of situations
\begin{equation}\label{Ezaf}
\begin{array}{ccc}
\left\{\begin{array}{lll}  k=t-1 \\ m=\beta+\gamma=1  \\  \delta=1      \end{array}\right. & \hspace{1cm} \text{ or  } \hspace{1cm} &
\left\{\begin{array}{lll} k=t+1  \\   m=\beta + \gamma =2 \\  \delta=0        \end{array}\right.
\end{array}
\end{equation}
holds.
Thus, the second term  in
\eqref{e(G)} is equal to $-1$, yielding that     $ e( G_{\mathrm{c}})\geqslant n-3+\binom{t}{2}$.
If $ F\neq\varnothing $, then $e( G_{\mathrm{b}})\geqslant e( G_{\mathrm{c}})+|F|\geqslant n-2+\binom{t}{2} $, we are done. Suppose  by way of contradiction that   $ F=\varnothing $.
Since (T.1) has happened, it follows from \eqref{Ezaf} that
there is an independent set  of size $ t-1 $  among $ A_1, \ldots, A_m $,  concluding  that $ t=3$ or $4 $. We show that $ G[A\cup B]$  is  a bipartite graph.
To see this, note that $A_1, \ldots, A_m$ are independent by \eqref{Ezaf} and so  one may consider the vertex bipartition $\{A_1, B_1\}$ for  $ m=1 $ and the vertex bipartition $\{A_1\cup B_2,  A_2 \cup B_1\}$    for $m=2$.
Using  the  connectivity of $ G $ and  starting by $ G[A\cup B] $, we may add vertices in $ C $ to $G[A\cup B]$  in some order such that in each step  the resulting graph is   bipartite. This means that $ G $ is bipartite  which  is  impossible, since a bipartite  graph is clearly not weakly $ K_{2,t}$-saturated.

\begin{case}\label{Case T2h}
(T.2) has happened.
\end{case}

As $ k=t $, it follows from  \eqref{e(G)} that
$e( G_{\mathrm{c}})= n-2+\binom{t}{2}-(m+\beta+\gamma-2)$. Since $e( G_{\mathrm{b}})\geqslant e( G_{\mathrm{c}})+|F|$, in order to prove the assertion, it suffices to show  that  $|F|\geqslant m+\beta+\gamma-2$.

We may   assume that   either $A\subseteq N_G(u, v)$ or $N_G(u, v)\cap A=A\setminus A_i$   for some  $ i \in \{1, \ldots, m\} $ with $ |A_i|=2$.
To see this, suppose   that  $ |N_G(u, v)\cap  A|\leqslant t-3$ and suppose that
$u$ or    $v$ is not adjacent to $ x_{s_j} $ for some $ j\in \{1, \ldots, m\} $. We find that
$|N_{G}(u, v)\setminus A|\geqslant t-1-|N_{G}(u, v)\cap A| \geqslant  2$.
So, we may   remove two arbitrary edges between   $ v $  and   $ N_{G}(u, v)\setminus A $ and join both  $ u$ and $ v $ to $ x_{s_j} $ by black edges and to all vertices in $ A_j\setminus \{x_{s_j}\}$ by red edges.
By repeating this, we get   $ |N_{G}(u, v)\cap  A|\geqslant t-2$, as desired.

If  $A\subseteq N_{G}(u, v)$, then there are $2m$ black  edges between  $\{u, v\}$ and  $\{x_{s_1}, \ldots,  x_{s_m}\}$ by (P.5)  which only two of them belong to $ E$ and thus $|F|\geqslant 2m-2\geqslant m+\beta+\gamma-2$, we are done.

So, assume   that $N_{G}(u, v)\cap A=A\setminus A_i$   for some  $ i \in \{1, \ldots, m\}  $ with $ |A_i|=2 $.
We have $|N_{G}(u, v)\setminus A|=|N_{G}(u,v)|-|N_{G}(u, v)\cap A|\geqslant t-1-|A\setminus A_i|=1$.
Fix  $w\in N_{G}(u, v)\setminus A$. Obviously, $ w\in C_1\cup C_2$. Then, there are $2m$ black  edges between  $\{u, v\}$ and  $(\{x_{s_1}, \ldots,  x_{s_m}\}\setminus\{x_{s_i}\})\cup\{w\}$ which at most three  of them belong to $ E$.  Hence,  $ |F| \geqslant 2m-3\geqslant m+\beta+\gamma-3$.

Towards   a contradiction, suppose   that   the inequality   $|F|\geqslant m+\beta+\gamma-2$ does not hold. We have   $  m+\beta+\gamma-3\geqslant |F| \geqslant 2m-3\geqslant m+\beta+\gamma-3$  which shows  that   $|F|=2m-3$ and $\beta+\gamma=m$.
It follows from  $ \beta+\gamma=m  $     that   $A_1, \ldots, A_m$ are independent sets.
Since there are $2m-2$ black  edges between  $\{u, v\}$ and  $\{x_{s_1}, \ldots,  x_{s_m}\}\setminus\{x_{s_i}\}$ and  only  two   of them belong to $ E$, we deduce that $ |F\cap E_{ G_{\mathrm{b}}}(A, C)|\geqslant 2m-4 $.
Also, at least one of the black   edges $uw$ or  $vw$ belong to $F$, meaning  that  $ |F\cap E_{ G_{\mathrm{b}}}(C)|\geqslant 1$.
Now,  from   $|F|=2m-3$, $ |F\cap E_{ G_{\mathrm{b}}}(A, C)|\geqslant 2m-4 $, and  $ |F\cap E_{ G_{\mathrm{b}}}(C)|\geqslant 1$, we  derive     that  $uv\notin E(G)$ and $ |F\cap E_{ G_{\mathrm{b}}}(C)|= 1$. Thus,
$w\in C_2$ and $ G_{\mathrm{c}}[C]= G_{\mathrm{b}}[C]-e$, where $e\in\{uw, vw\}$.

Denote by  $G'$  be the graph obtained from $G$ by joining $u$ to all vertices in $N_G(v)\setminus N_G(u)$ and joining  $v$ to all vertices in $N_G(u)\setminus N_G(v)$. Set  $ A'=A\cup \{u,v\}$, $ C'=C\setminus \{u,v\}$,  $ A_j'=A_j$ for $ j=1,\ldots, m$,   and     $  A_{m+1}'=\{u,v\} $.
We know that  $A_1', \ldots, A_{m+1}'$ are independent sets.
For each  $j\geqslant 0$, let $C_j'$ be    the set of vertices in $ C' $ with    the   distance $j$  from $A'$  in $  G'$ and
let  $\{C_1', \ldots, C_{d'}'\}$ be  a partition of $ C'$.
Since  $ G'[C']= G_{\mathrm{c}}[C']$, we observe    in     $  G'$ that  $ C_1', \ldots, C_{d'}'$   are independent sets    and moreover,  for $ j=2,\ldots, d'$,  every vertex in $ C_j'$ has exactly one neighbor in $ C_{j-1}'$.
Further, for any  vertex   $c\in C_1' $, there is an index  $j\in\{1, \ldots, m+1\}$ such that  $ N_{G'}(c)=A_j'$.
Using these features,    the following statements  are  easily obtained   for  two    arbitrary distinct  vertices $y, z \in V(G')$.
\begin{itemize}
\item[(i)] Let $y\in C'$ and $ z\in A'\cup C'$. Then,  $N_{G'}(y, z)$ is  one of $\varnothing$, $\{c\}$, or $A_j'$ for some  vertex   $ c \in C'$ and  integer  $j\in \{1,\ldots, m+1\}$.
Hence, $|N_{G'}(y, z)|\leqslant  t-2$.
\item[(ii)] Let $y,z\in A'$. If $y\in A_{j}'$ and $ z\in A_{\ell}'$ for some indices  $ j\neq \ell$, then $N_{G'}(y, z)\subseteq A'\setminus (A_{j}'\cup A_{\ell}')$. Thus, $|N_{G'}(y, z)|\leqslant t-2$.
\end{itemize}
As $G'$ is not a complete graph and  there is no pair $\{ y, z\} $   of vertices of $G'$
such that  $ |N_{G'}(y, z)|\geqslant t-1 $ and $ N_{G'}(y)\setminus\{z\}\neq N_{G'}(z)\setminus\{y\}$, one derives  that $ G'$ and therefore $G$  are  not weakly $ K_{2,t}$-saturated,   a contradiction.

\begin{case}\label{Case T3h}
(T.3) has happened.
\end{case}

As $ k=t+1 $, it follows from  \eqref{e(G)} that
$e( G_{\mathrm{c}})=  n-2+\binom{t}{2}-(\beta+\gamma-1)$. Since $e( G_{\mathrm{b}})\geqslant e( G_{\mathrm{c}})+|F|$, in order to prove the assertion, it is sufficient  to show  that   $|F|\geqslant \beta+\gamma-1$.

We may  assume that    either $A\subseteq N_{G}(u, v)$ or $N_{G}(u, v)\cap A=A\setminus A_i$   for some  $ i \in \{1, \ldots, m\}  $ with $ |A_i|\in\{2, 3\}$.
To see this, suppose   that  $ |N_{G}(u, v)\cap  A|\leqslant t-3$ and
$u$ or    $v$ is not adjacent to $ x_{s_j} $ for some $ j\in \{1, \ldots, m\} $. We find that
$|N_{G}(u, v)\setminus A|\geqslant t-1-|N_{G}(u, v)\cap A| \geqslant  2$.
So, we may   remove two arbitrary edges between   $ v $  and   $ N_{G}(u, v)\setminus A $ and join both  $ u$ and $ v $ to $ x_{s_j} $ by black edges and to all vertices in $ A_j\setminus \{x_{s_j}\}$ by red edges.
By repeating this, we get   $ |N_{G}(u, v)\cap  A|\geqslant t-2$, as desired.

If  $A\subseteq N_{G}(u, v)$, then there are $2m$ black  edges between  $\{u, v\}$ and  $\{x_{s_1}, \ldots,  x_{s_m}\}$ by (P.5)  which only two of them belong to $ E$ and thus  $|F|\geqslant 2m-2\geqslant \beta+\gamma-1$, we are done.

So, assume   that $N_{G}(u, v)\cap A=A\setminus A_i$   for some  $ i \in \{1, \ldots, m\}  $ with $ |A_i|\in\{2, 3\}$.
Thus, there are $2m-2$ black  edges between  $\{u, v\}$ and  $\{x_{s_1}, \ldots,  x_{s_m}\}\setminus\{x_{s_i}\}$ which only two   of them belong to $ E$.
Therefore,  $ |F| \geqslant 2m-4\geqslant m-2\geqslant \beta+\gamma-2$.

Working toward  a contradiction,   suppose  that   the inequality    $|F|\geqslant \beta+\gamma-1$ is  not valid. We  have
$\beta+\gamma-2\geqslant |F| \geqslant 2m-4\geqslant m-2\geqslant \beta+\gamma-2$
which means that
$m=\beta+\gamma=2$ and $F=\varnothing$.
It follows from  $ m=\beta+\gamma=2 $     that    $A_1, A_2$ are independent sets and $A=A_1\cup A_2$.
Since $ t+1=|A_1|+|A_2|\leqslant 2\min \{3, t-2\}$, one  concludes  that $ t=5$ and therefore   $ |A_1|=|A_2|=3$.
Furthermore, it follows from $F=\varnothing$ that  $ |N_{G}(u,v)\cap C|=0 $ and so   $ |N_{G}(u,v)|=|N_{G}(u,v)\cap A| =3$,  contradicts with $ |N_G(u,v)|\geqslant t-1$.

\begin{case}\label{Case T4h}
(T.4) has happened.
\end{case}

As $ k=t+1 $, it follows from  \eqref{e(G)} that
$e( G_{\mathrm{c}})=  n-2+\binom{t}{2}-(\beta+\gamma-1)$.  Since $e( G_{\mathrm{b}})\geqslant e( G_{\mathrm{c}})+|F|$, in order to prove the assertion, it is enough   to show  that $|F|\geqslant \beta+\gamma-1$.
Before proceeding,  we point out that    $2m\leqslant |A_1|+\cdots+|A_m|\leqslant m(t-2)$ and so   $2m\leqslant t+1\leqslant m(t-2)$  which forces that  $t\geqslant5$.

From   $u\in A$ and  in view of (P.1), we may assume that  $ u=x_{s_i}$ for some $i \in \{1, \ldots, m\} $.
Suppose that  $ v $ is not adjacent to $ x_{s_j} $ for some $ j \in  \{1, \ldots,  m\}\setminus\{i\} $. We have
$|N_{G}(u, v)\setminus A|\geqslant t-1-|N_{G}(u, v)\cap A|
\geqslant t-1 -(k-|\{u\}\cup A_j|)= |A_j|-1\geqslant 1$.
So, we may   remove an  arbitrary  edge  between  $ v $   and   $ N_{G}(u, v)\setminus A $ and join $v$ to $ x_{s_j} $ by a black edge and to  all vertices in  $ A_j\setminus \{x_{s_j}\} $ by red edges.
By repeating this, we  get
$A\setminus A_i \subseteq N_{G}(u, v)$.
Accordingly,   there are $m-1$ black  edges between  $v$ and  $\{x_{s_1}, \ldots,  x_{s_m}\}\setminus\{x_{s_i}\}$ which only one  of them belongs to $E$.
Therefore,  $ |F| \geqslant m-2\geqslant \beta+\gamma-2$.

Towards a contradiction, suppose that the inequality  $|F|\geqslant \beta+\gamma-1$ does not hold.
We have   $\beta+\gamma-2\geqslant|F| \geqslant m-2\geqslant \beta+\gamma-2$ which means that  $|F|=m-2$ and $\beta+\gamma=m$.
It follows from  $ m=\beta+\gamma $     that   $A_1, \ldots, A_m$ are independent sets.
Also,  it follows from   $|F|=m-2$  that $uv\notin E(G)$ and $ G_{\mathrm{b}}[C]= G_{\mathrm{c}}[C]$. The latter equality shows that $N_{G}(u,v)\cap C=\varnothing$. Since $ A_i$ is an independent set, we get $N_{G}(u,v)=A\setminus A_i$  which in turn yields that $|A_i|=2$.

Denote by  $G'$    the graph obtained from $G$ by joining both  vertices in  $A_i$ to all vertices in $N_G(v)\setminus N_G(u)$ and joining  $v$ to all vertices in $N_G(u)\setminus N_G(v)$. Set  $ A'=A\cup \{v\}$,  $ C'=C\setminus \{v\}$, $ A_i'= A_i\cup\{v\}$,  and $ A_j'=A_j$ for any $ j\in\{1, \ldots, m\}\setminus  \{i\}$.
We know that  $A_1', \ldots, A_{m+1}'$ are independent sets.
For each  $j\geqslant 0$,  let    $C_j'$       be   the set of vertices in $ C' $ with    the   distance $j$  from $A'$  in $  G'$ and
let  $\{C_1', \ldots, C_{d'}'\}$ be  a partition of $ C'$.
As   $ G'[C']= G_{\mathrm{c}}[C']$, we observe    in      $  G'$ that  every $ C_j'$ is an independent set and moreover,  for $ j=2,\ldots, d'$,  every vertex in $ C_j'$ has exactly one neighbor in $ C_{j-1}'$.
Further, for any  vertex   $c\in C_1' $, there is an index $j\in\{1, \ldots, m\}$ such that  $ N_{G'}(c)=A_j'$.
Using  these features and noting that   $t\geqslant5$,    the following statements  are  straightforwardly  obtained   for  two    arbitrary distinct  vertices $y, z \in V(G')$.
\begin{itemize}
\item[(i)] Let $y\in C'$ and $ z\in A'\cup C'$. Then,  $N_{G'}(y, z)$ is  one of $\varnothing$, $\{c\}$, or $A_j'$ for some  vertex  $ c \in C'$ and   integer   $j\in \{1,\ldots, m\}$.
Thus, $|N_{G'}(y, z)|\leqslant t-2$.
\item[(ii)] Let $y,z\in A'$. If $y\in A_{j}'$ and $ z\in A_{\ell}'$ for some indices  $ j\neq \ell$, then $N_{G'}(y, z)\subseteq A'\setminus (A_{j}'\cup A_{\ell}')$. Hence, $|N_{G'}(y, z)|\leqslant t-2$.
\end{itemize}
As  $G'$ is not a complete graph and  there   is no pair $\{ y, z\} $   of vertices of $G'$
such that  $ |N_{G'}(y, z)|\geqslant t-1 $ and $ N_{G'}(y)\setminus\{z\}\neq N_{G'}(z)\setminus\{y\}$, one  deduces   that $ G'$ and therefore $G$  are  not weakly $ K_{2,t}$-saturated,   a contradiction.

The proof  is  completed here.
\end{proof}

To complete the proof of Theorem \ref{main.thm},  it remains  to establish    that    $\mathrm{wsat}(n, K_{2, t})\geqslant n-1+\binom{t}{2}$ for every
integers $n, t$ when  $t$ is even   and    $t+2 \leqslant n \leqslant 2t-2 $.
The following two Lemmas   have  a crucial role in the    proof of  the   last  lemma.

\begin{lemma}\label{XYZ}
Let $t\geqslant 2$ and $G$ be a graph with the  vertex set   $V(G)=X\cup Y \cup Z$ for three sets $X, Y, Z$ with $|X|\geqslant2$ and $|Y|\geqslant1$ and the edge set $E(G)=\{uv \, | \,  (u,v)\in (X,X)\cup (X,Y)\cup (Y,Z)\}$.
If $G$ is  weakly $ K_{2,t} $-saturated, then either $|X|\geqslant t$ or $|Y|\geqslant t-1$ and $|X|+|Z|\geqslant t$.
\end{lemma}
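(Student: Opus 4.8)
The plan is to argue the contrapositive through the $K_{2,t}$-bootstrap closure of $G$. First I would record the elementary addition rule: a copy of $K_{2,t}$ containing a freshly added edge $uv$ must have $uv$ joining the $2$-side to the $t$-side, so $uv$ is addable to a graph $H$ precisely when some $w\in N_H(v)\setminus\{u\}$ (or symmetrically some $w\in N_H(u)\setminus\{v\}$) satisfies $|N_H(u,w)|\geqslant t-1$; adding $uv$ then makes $v$ a $t$-th common neighbour of $u$ and $w$. Since this rule is monotone in $H$, the closure of $G$ under repeated additions is well defined and independent of order, and $G$ can be weakly $K_{2,t}$-saturated only if this closure is $K_n$. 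Hence it suffices to prove: if $|X|\leqslant t-1$ and in addition $|Y|\leqslant t-2$ or $|X|+|Z|\leqslant t-1$, then the closure omits some edge. Write $x=|X|$, $y=|Y|$, $z=|Z|$; I may assume $G$ has a non-edge, since otherwise $x=n-1$ and the conclusion is immediate in the relevant range.

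\emph{Case $1$ ($x\leqslant t-1$ and $y\leqslant t-2$).} Here I would verify that $G$ is already closed. The non-edges of $G$ are the pairs inside $Y$, the pairs inside $Z$, and the pairs between $X$ and $Z$. Using $N_G(z)=Y$ for $z\in Z$, $N_G(w)=X\cup Z$ for $w\in Y$, and $N_G(x')=(X\setminus\{x'\})\cup Y$ for $x'\in X$, a short computation of the relevant common neighbourhoods shows that for every such non-edge $uv$ and every admissible witness $w$ one has $|N_G(u,w)|\in\{0,\,x-1,\,y\}$, and $\max\{x-1,y\}\leqslant t-2<t-1$. Thus no non-edge is addable, the closure equals $G\neq K_n$, and $G$ is not weakly $K_{2,t}$-saturated.

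\emph{Case $2$ ($x\leqslant t-1$ and $x+z\leqslant t-1$).} We may assume $y\geqslant t-1$, as otherwise Case $1$ applies. I would first drive the closure up to $G^{*}=K_{x+z}\vee\overline{K_{y}}$, the clique on $X\cup Z$ completely joined to the independent set $Y$, and then show $G^{*}$ is closed. To reach $G^{*}$: each edge $x'z'$ with $x'\in X$, $z'\in Z$ is addable with witness any $w\in X\setminus\{x'\}$, because $N(z')$ and $N(w)$ both contain $Y$ and $|Y|=y\geqslant t-1$; after adding all of these every vertex of $X$ is universal. Then each edge inside $Z$ is addable with a universal witness $w\in X$, since its endpoint's neighbourhood already contains $X\cup Y$ with $x+y\geqslant t+1$, so the common neighbourhood has size $\geqslant t$. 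This yields $G^{*}$. Finally, in $G^{*}$ the only non-edges lie inside $Y$, and for $y_1,y_2\in Y$ every witness $w$ is universal with $|N_{G^{*}}(y_1,w)|=|(X\cup Z)\setminus\{w\}|=x+z-1\leqslant t-2<t-1$; hence $G^{*}$ is closed. Since $y\geqslant t-1\geqslant 2$, the set $Y$ contains a non-edge, so the closure $G^{*}\neq K_n$ and $G$ is not weakly $K_{2,t}$-saturated.

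The routine part is the bookkeeping of common-neighbourhood sizes in the two cases. The one genuinely delicate point is Case $2$, where I must guarantee the closure gets no further than $G^{*}$, ruling out any clever order that might eventually add an edge inside $Y$. This is exactly why I route the whole argument through the order-independent bootstrap closure: I exhibit one legal order producing $G^{*}$ and separately check that $G^{*}$ admits no further addition, so monotonicity pins the closure down to $G^{*}$ and no smarter sequence can do better.
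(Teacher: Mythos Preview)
Your proof is correct and follows essentially the same route as the paper's: both arguments show that if $|X|\leqslant t-1$ and $|Y|\leqslant t-2$ then no edge of $\overline{G}$ is addable, and that if $|X|\leqslant t-1$ and $|X|+|Z|\leqslant t-1$ the process reaches at most the graph $K_{|X|+|Z|}\vee\overline{K_{|Y|}}$ (your $G^{*}$, the paper's $G'$) and stalls there. The only cosmetic difference is that you phrase it contrapositively via the order-independent bootstrap closure, whereas the paper argues directly by analysing the first addable edge at each stage.
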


\begin{proof}
Assume that $G$  is a weakly $ K_{2,t} $-saturated graph and $|X|\leqslant t-1$. At the beginning of the weakly $K_{2,t}$-saturation process, there are two vertices $a, b\in V(G)$ such that $|N_G(a,b)|\geqslant t-1$ and $N_G(a)\setminus \{b\}\neq N_G(b)\setminus \{a\}$. According to the structure of $G$ and noting that $N_G(a)\setminus \{b\}\neq N_G(b)\setminus \{a\}$, we deduce that    $\{a, b\}$ is not contained in one of $X, Y, Z$.  If $a\in X$ and $b\in Y$, then $N_G(a, b)= X\setminus \{a\}$ and if $a\in Y$ and $b\in Z$, then $N_G(a,b)=\varnothing$. The both cases are impossible,  as $|N_G(a,b)|\geqslant t-1$. So, we may assume that $a\in X$ and $b\in Z$.
Since $N_G(a, b)\subseteq Y$, one concludes that $|Y|\geqslant t-1$.
Now, as  $X\setminus\{a\} \subseteq N_G(a)$, all edges in $E_G(b, X\setminus\{a\})$  can be added to $G$ and therefore, as  $|X|\geqslant 2$, all edges in  $E_G(X, Z)$ can be added to $G$. Also,  it follows from  $Z \subseteq N_G(a)$ that  all edges in $E_G(b, Z)$ and hence all edges in $E_G(Z)$  can be added to $G$.

Denote the resulting graph by $ G' $. Note that  $V(G')=Y\cup Y'$ and $E(G')=\{uv \, | \,  (u, v)\in (Y, Y')\cup (Y', Y')\}$, where $Y'=X\cup Z$.
In  order to proceed the  weakly $ K_{2,t} $-saturation process,    two vertices $c, d\in V(G')$ must exist  such that $|N_{G'}(c,d)|\geqslant t-1$ and $N_{G'}(c)\setminus \{d\}\neq N_{G'}(d)\setminus \{c\}$. According to the structure of $G'$ and noting that $N_{G'}(c)\setminus \{d\}\neq N_{G'}(d)\setminus \{c\}$, we may assume that $c\in Y$ and $d\in Y'$.
As $N_{G'}(c, d)\subseteq Y'\setminus \{d\}$, one deduces  that $|Y'\setminus \{d\}|\geqslant t-1$. This means that $|X|+|Z|\geqslant t$, completing the proof.
\end{proof}

It is straightforward to check that the converse of Lemma \ref{XYZ} is also valid.

\begin{lemma}\label{degone}
Let $t\geqslant3$ and let $ G$ be a weakly $K_{2,t}$-saturated graph of order  $ n $  with  $  n\leqslant 2t-2 $. Assume that   $ v $ is   a degree one vertex in $ G $. Then,  $ G-v$ is also  weakly $ K_{2,t} $-saturated.
\end{lemma}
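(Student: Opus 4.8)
The plan is to work with the order-independent $K_{2,t}$-bootstrap closure: writing $\langle H\rangle$ for the graph obtained from $H$ by repeatedly adding every edge whose insertion creates a new copy of $K_{2,t}$, a graph $H$ on $n$ vertices is weakly $K_{2,t}$-saturated precisely when it is $K_{2,t}$-free and $\langle H\rangle=K_n$; as the closure is independent of the order of additions, I may add edges in whatever order is convenient. Since $G-v$ is a subgraph of the $K_{2,t}$-free graph $G$, it is $K_{2,t}$-free, so the task reduces to proving $\langle G-v\rangle=K_{n-1}$ on the vertex set $V(G)\setminus\{v\}$. The backbone of everything below is the remark that $\delta(K_{2,t})=2$, so a vertex of degree $1$ lies in no copy of $K_{2,t}$: as long as $v$ retains degree $1$, no insertion of an edge avoiding $v$ can use $v$, and an insertion of an edge $vw$ can create a copy of $K_{2,t}$ only with $v$ as a leaf whose two centres are $u$ and $w$, which requires $u$ and $w$ to have at least $t-1$ common neighbours.

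I would first dispose of the range $n\leqslant t+2$. Here $K_{2,t}$ has $t+2>n-1$ vertices, so no copy of $K_{2,t}$ fits among the $n-1$ vertices of $V(G)\setminus\{v\}$. Combined with the backbone remark, this shows that while $v$ has degree $1$ no edge avoiding $v$ is ever addable in a saturation process on $G$; since such an edge is also a non-edge of $G$, the weak saturation of $G$ forces $G$ to have no non-edge avoiding $v$, that is, $G-v=K_{n-1}$. As $n-1<t+2$, the complete graph $K_{n-1}$ is vacuously weakly $K_{2,t}$-saturated, settling this range.

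For the remaining range $t+3\leqslant n\leqslant 2t-2$ I would argue by contradiction. Suppose $\overline{H}:=\langle G-v\rangle\neq K_{n-1}$ and form the graph $\hat{H}=\overline{H}+uv$, attaching $v$ as a pendant at $u$. Then $E(G)=E(G-v)\cup\{uv\}\subseteq E(\hat{H})$, so $K_n=\langle G\rangle\subseteq\langle \hat{H}\rangle$ and $\hat{H}$ is weakly $K_{2,t}$-saturated. Because $\overline{H}$ is closed among the non-$v$ vertices and $v$ is pendant, the backbone remark shows that the only edges addable to $\hat{H}$ are edges at $v$, the first of which, say $vw_1$, forces $u$ and $w_1$ to have at least $t-1$ common neighbours in $\overline{H}$. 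I would then run the bootstrap on $\hat{H}$ and track the growth of $N(v)$: at every stage $v$ can serve only as a single auxiliary leaf for some centre-pair contained in its current neighbourhood, thereby lowering that pair's common-neighbour threshold from $t$ to $t-1$. Recording the bottleneck configuration as a partition of the vertices into a centre-side, its common neighbourhood, and the rest is designed to produce exactly the structure $X\cup Y\cup Z$ of Lemma~\ref{XYZ}.

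The main obstacle is precisely this last step: controlling the propagation and proving that one pendant vertex cannot unlock an already-closed, incomplete $\overline{H}$ once $n\leqslant 2t-2$. The numerical engine is that any two vertices now share at most $n-2\leqslant 2t-4$ neighbours while every copy of $K_{2,t}$ demands $t$ common neighbours, so the single saving afforded by $v$ is too rigid to sustain a full completion. Feeding the bottleneck configuration into Lemma~\ref{XYZ} should yield $|Y|\geqslant t-1$ together with $|X|+|Z|\geqslant t$, forcing $|X|+|Y|+|Z|\geqslant 2t-1$ and thereby exceeding the number $\leqslant 2t-2$ of available vertices, a contradiction. This contradiction gives $\overline{H}=K_{n-1}$, that is, $\langle G-v\rangle=K_{n-1}$, so $G-v$ is weakly $K_{2,t}$-saturated, completing the proof.
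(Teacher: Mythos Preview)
Your proposal has two genuine gaps. In the range $n\leqslant t+2$, from ``while $v$ has degree $1$ no edge avoiding $v$ is addable'' you cannot infer that $G$ has no non-edge avoiding $v$: a saturation process on $G$ may first add edges incident to $v$, raising its degree, after which copies of $K_{2,t}$ through $v$ become available and edges avoiding $v$ can then be inserted. So the leap to $G-v=K_{n-1}$ is unjustified. In the main range $t+3\leqslant n\leqslant 2t-2$ your argument is only an outline: you never define the sets $X,Y,Z$, never verify the very specific edge pattern that Lemma~\ref{XYZ} requires (all of $(X,X)$, $(X,Y)$, $(Y,Z)$ present and nothing else), never exclude the first alternative $|X|\geqslant t$ of its conclusion, and you explicitly label the crucial step as ``the main obstacle'' that ``should yield'' the result. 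Once $v$ acquires degree at least $2$, edges among $V(G)\setminus\{v\}$ begin to unlock in ways that a single ``bottleneck configuration'' does not capture; this is exactly where the real work lies, and it is not done.

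For comparison, the paper does not use Lemma~\ref{XYZ} here at all. It first runs the closure on $V(G)\setminus\{v\}$, then organises those vertices by the equivalence relation $x\approx y\Leftrightarrow N_G(x)\setminus\{v,y\}=N_G(y)\setminus\{v,x\}$, shows (using $n\leqslant 2t-2$) that each class is a clique of size at most $t-1$ or an independent set of size at most $t-2$, proves the class $\EuScript{A}$ of the neighbour $u$ must be a clique of size exactly $t-1$, absorbs $v$ first into $\EuScript{A}$ and then into the next independent class $\EuScript{B}$ it reaches, and finally checks directly that any further progress forces $n\geqslant 2t$. The equivalence-class bookkeeping is precisely what makes the propagation of $N(v)$ tractable, and it replaces the vague ``bottleneck'' picture with a concrete finite case analysis.
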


\begin{proof}
Do a weakly $ K_{2,t} $-saturation  process on $ V(G)\setminus \{v\} $ by joining nonadjacent vertices as far  as possible. It follows that  $N_G(x)\setminus\{v, y\}=N_G(y)\setminus\{v, x\}$ for every two vertices $x,y \in V(G)\setminus \{v\}$ with $ |N_G(x,y) |\geqslant t-1$.
Suppose by way of contradiction
that $ G-v  $ is not a complete graph. Define the relation $ \approx $ on $  V(G)\setminus \{v\} $ as $  x\approx y $ if $ N_G(x)\setminus\{v, y\}=N_G(y)\setminus\{v, x\}$. Clearly, $ \approx $ is an equivalence
relation on $  V(G)\setminus \{v\} $. Note that  the equivalence classes are cliques  or independent sets and the  connections  between two distinct equivalence classes are all  present or all  absent. Moreover, in view of       $n\leqslant 2t-2$ and $ \deg_G(v)=1$, we observe that  any  independent equivalence class is of size  at most $t-2$. Further,
any  clique  equivalence class is of size  at most $t-1$. Otherwise,   we observe that   $K_{t+1}$ is a subgraph of $ G-v  $ and, by the connectivity of $G$ and applying   Lemma \ref{pendent}, we deduce that $G-v$ is a    weakly $ K_{2,t} $-saturated graph,  a contradiction.

Assume that $\EuScript{A} $ is the equivalence class containing the neighbor of $ v $.
Note that $\EuScript{A}\not= V(G)\setminus \{v\} $.
We continue  the weakly $ K_{2,t} $-saturation process on $ G $ by joining $ v $ to all vertices in $ \EuScript{A} $.
As  $ G $ is not complete, there is a vertex $w$ such that  $ |N_{G}(v,w)|\geqslant t-1 $ and $ N_{G}(v)\setminus\{w\}\neq N_{G}(w)\setminus\{v\}$.
Since $ N_{G}(v)= \EuScript{A} $, we
deduce that   $\EuScript{A}$ is a clique of size $t-1$ and moreover,   $ \EuScript{A}\subseteq N_{G}(w)$ implies  that   $w\notin \EuScript{A}$.
Let $ \EuScript{B}$   be   the equivalence class containing $w$.
We claim that $ \EuScript{B} $ is an independent set. By contradiction,  suppose  that $w$ has a neighbor   $b\in\EuScript{B}$.
Letting    $a\in \EuScript{A}$, we have   $(\EuScript{A}\setminus \{a\})\cup\{w\}\subseteq N_{G}(a,b) $.
This implies that   $ a\approx b $  which   contradicts $\EuScript{A}\neq\EuScript{B}$, proving the claim.    Now, add $v$ to  $\EuScript{B}$, join $v$ to all vertices in   $N_{G}(w)\setminus N_{G}(v) $, and  join $w$ to all vertices in   $N_{G}(v)\setminus N_{G}(w) $.
The resulting graph is not still complete, since   $\EuScript{B} $ is an   independent set.
Thus, to proceed with the weakly $ K_{2,t} $-saturation  process, there should  be two vertices $ x \napprox y $   such that $ |N_G(x,y)|\geqslant t-1 $ and
$v\in N_{G}(x,y) $.

Since    $ x \napprox y $, at most one of $x, y$ belongs  to $\EuScript{A}$.
First, suppose  that $ x,y \notin \EuScript{A}$.
From  $ x \napprox y $  and  $ |\EuScript{A}|=t-1 $, we conclude that $ \EuScript{A} \nsubseteq N_G(x,y)$ which means that $\EuScript{A} \cap  N_{G}(x,y)=\varnothing$. Then, it follows
from $x, y\notin \EuScript{A}$ and  $  \{x,y\}\cup \EuScript{A} \cup N_{G}(x,y)\subseteq V(G) $ that
$n\geqslant 2t$, a contradiction.  Next, suppose  without loss of generality that $ x\in \EuScript{A}$ and $ y\notin \EuScript{A}$. As
$v\in N_G(x,y)$, we deduce  that $ w \in N_G(x,y)$ and so $(\EuScript{A}\setminus \{x\}) \cup \{w\} \subseteq N_G(x,y)$. This shows that $ |N_G(x,y)|\geqslant t-1$ and therefore $ x\approx y$, a contradiction.
This contradiction completes the proof.
\end{proof}

The following lemma establishes  a    lower     bound on    $\mathrm{wsat}(n, K_{2, t})$ for even     $t$.

\begin{lemma}\label{event}
Let $t\geqslant4$ be even and let $n$ be an integer with  $t+2 \leqslant n \leqslant 2t-2 $. Then,  $\mathrm{wsat}(n, K_{2, t})\geqslant n-1+\binom{t}{2}$.
\end{lemma}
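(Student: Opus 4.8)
The plan is to proceed by induction on $n$, with the base case $n=t+2$ handed to us by Theorem \ref{Kst}: since $t$ is even we have $\gcd(2,t)=2\neq1$, so $\mathrm{wsat}(t+2,K_{2,t})=\binom{t+1}{2}+1$, and a direct computation gives $\binom{t+1}{2}+1=(t+2)-1+\binom{t}{2}$, exactly the asserted value. This is the only place the parity of $t$ is used; the odd case would instead produce $\binom{t+1}{2}$, which explains the difference of one between the two regimes of Theorem \ref{main.thm}.

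For the inductive step I would assume, towards a contradiction, that there is a weakly $K_{2,t}$-saturated graph $G$ on $n$ vertices with $t+3\leqslant n\leqslant 2t-2$ and $e(G)=n-2+\binom{t}{2}$, one fewer edge than asserted. Deleting a pendant vertex is the engine of the reduction: if $G$ has a vertex $v$ of degree one, then $n\leqslant 2t-2$ lets us invoke Lemma \ref{degone}, so $G-v$ is weakly $K_{2,t}$-saturated on $n-1\geqslant t+2$ vertices with $e(G-v)=(n-1)-2+\binom{t}{2}$. The quantity $e-\bigl(n-2+\binom{t}{2}\bigr)$ is therefore unchanged under pendant deletion, and iterating terminates in one of two ways: either at a graph of order exactly $t+2$ with $\binom{t+1}{2}$ edges, which contradicts the base case, or at a weakly $K_{2,t}$-saturated graph $G'$ with $\delta(G')\geqslant2$ and order $n'$ satisfying $t+3\leqslant n'\leqslant 2t-2$ and $e(G')=n'-2+\binom{t}{2}$.

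Everything thus reduces to the following structural statement, which I regard as the heart of the lemma: a weakly $K_{2,t}$-saturated graph of minimum degree at least $2$ and order at most $2t-2$ must have at least $n-1+\binom{t}{2}$ edges, so $G'$ cannot exist. To prove it I would run the saturation process on $G'$ by joining only forced pairs for as long as possible, exactly in the spirit of the proof of Lemma \ref{degone}, and then examine the resulting twin-equivalence decomposition, whose classes are cliques or independent sets with every two classes either completely joined or completely non-adjacent. The hypothesis $n'\leqslant 2t-2$ is doing the real work here: once a saturating pair with at least $t-1$ common neighbours is fixed, at most $t-3$ vertices lie outside that common neighbourhood, which should force the decomposition to collapse to the three-part layered shape $X\cup Y\cup Z$ of Lemma \ref{XYZ} with $X$ a clique. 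That lemma then yields $|X|\geqslant t$, or else $|Y|\geqslant t-1$ and $|X|+|Z|\geqslant t$; comparing $e(G')=\binom{|X|}{2}+|X||Y|+|Y||Z|$ with $|X|+|Y|+|Z|-1+\binom{t}{2}$ gives the desired inequality, the unique tight configuration $|X|=t$, $|Y|=1$, $|Z|=t-3$ being excluded because $\delta(G')\geqslant2$ forces $|Y|\geqslant2$ whenever $Z\neq\varnothing$.

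The step I expect to be the main obstacle is precisely this extraction of the layered form: verifying that, after maximal forced saturation, a general minimum-degree-$\geqslant2$ weakly $K_{2,t}$-saturated graph of order at most $2t-2$ organizes into exactly three twin-classes with $X$ a clique and with the adjacency pattern demanded by Lemma \ref{XYZ}. Ruling out decompositions with additional classes, or with the wrong joins between them, is the delicate combinatorial core, and it is here---rather than in the final counting---that the restriction $n\leqslant 2t-2$ must be used in full, since for larger $n$ the value $n-2+\binom{t}{2}$ is actually attained by $\mathbbmsl{G}_{n,t}$.
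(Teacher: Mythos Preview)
Your reduction to a minimum counterexample $G'$ with $\delta(G')\geqslant 2$ via Theorem \ref{Kst} and Lemma \ref{degone} matches the paper exactly. The gap lies entirely in the ``structural statement'' you isolate, and it is twofold. First, the equation $e(G')=\binom{|X|}{2}+|X||Y|+|Y||Z|$ is written for the wrong graph: the three-part twin structure is something you obtain only \emph{after} running saturation moves on $G'$, so that formula computes the edge count of a saturated supergraph $G''\supseteq G'$, not of $G'$. Since $e(G'')\geqslant e(G')$, a lower bound on $e(G'')$ yields no lower bound on $e(G')$ and hence no contradiction. (Note also that if $G'$ is genuinely weakly $K_{2,t}$-saturated, then running saturation ``as far as possible'' terminates at $K_{n'}$, not at any layered intermediate.) Second, the claim you correctly flag as ``the delicate combinatorial core''---that the decomposition collapses to exactly the three-layer shape of Lemma \ref{XYZ}---is the entire content of the lemma, and the proposal offers no argument for it.

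The paper proceeds quite differently and never tries to lower-bound $e(G')$ directly. It attaches $t^3$ pendants to every vertex of the minimum counterexample $G_0$ and reruns the full black/red edge process from the proof of Lemma \ref{generallower} on the enlarged graph, then carries out a lengthy case analysis of the four termination states (T.1)--(T.4), each splitting into several subcases. In some subcases it shows that a certain supergraph $G'$ of $G$ is already stuck (no pair $\{y,z\}$ with $|N_{G'}(y,z)|\geqslant t-1$ and distinct neighbourhoods), contradicting weak saturation; in the remaining subcases it shows that $G_0$ is a spanning subgraph of the four-part graph $\mathbbmsl{H}$ of Figure \ref{figH} with $|X|\leqslant t-1$, whence Lemma \ref{XYZ} (used in the contrapositive direction) says $\mathbbmsl{H}$, and by monotonicity $G_0$, cannot be completed to $K_{n_0}$. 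So Lemma \ref{XYZ} is deployed to derive non-saturation of a supergraph, not to count edges. A direct twin-decomposition route of the kind you sketch may exist, but it would itself be the main theorem; as written the proposal stops at precisely the point where the real work begins.
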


\begin{proof}
For $t=4$, the assertion  follows from Theorem \ref{Kst}. So,     assume that $t\geqslant6$ is fixed. Working toward   a contradiction,
consider    a weakly $ K_{2,t} $-saturated graph $ G_0 $ which is a counterexample with the minimum possible  order.
In view of Lemma \ref{generallower},  we find that  $ e(G_0)=n_0-2+\binom{t}{2}$, where $ n_0=|V(G_0)|$.
We have  $t+3\leqslant n_0\leqslant 2t-2$ by Theorem \ref{Kst} and
$\delta(G_0)\geqslant2$     by Lemma  \ref{degone}.
Using Lemma \ref{pendent}, if we attach a new pendent vertex to a vertex of $ G_0 $, then the resulting graph is also weakly $ K_{2,t}$-saturated
whose number of vertices  is one more  than the number of vertices of $G_0$  and whose number of edges is one more  than the number of edges of $G_0$.
Attach $ t^3$ new pendent vertices to each vertex of $ G_0 $ and call
the resulting graph by  $ G$. By assuming $ n=|V(G)|$, we have
$e(G)= n-2+\binom{t}{2}$.

We  apply the process defined in  the proof of Lemma \ref{generallower} to $G$ and assume that the process has terminated. As the number of black edges does not change during the process, $ e( G_{\mathrm{b}})=n-2+\binom{t}{2}$.
We need a careful exploration  of  termination states (T.1)--(T.4) to get  a contradiction. We will do it below   by distinguishing  Cases \ref{evenT1}, \ref{evenT2}, \ref{evenT3}, and \ref{evenT4}.

\begin{case}\label{evenT1}
(T.1) has happened.
\end{case}

Since  $ e( G_{\mathrm{b}})=n-2+\binom{t}{2}$, it follows from   \eqref{e(G)} that
\begin{equation}\label{nega}
(m-1)(k-t-1)-\beta-\gamma-\delta+1\leqslant0.
\end{equation}
Using    $ k\geqslant t-1+2(m-1)$, $ \beta +\gamma \leqslant m$, and $  \delta\leqslant  1$,  we derive  from \eqref{nega} that $ m=1$ or $2$. First,   assume that  $ m=1$. Then,   we deduce  from  $ k\geqslant t-1\geqslant 5$ that  $ \beta=\gamma =0$. Thus,   it follows  from   \eqref{nega}   that      $ \delta=1$ and so  $ k=t-1$. Next, assume that  $ m=2$.  As  $ k\geqslant t-1+2(m-1)$, we have   $ k\geqslant t+1$ and so $ \delta =0$. Hence,  it follows  from   \eqref{nega}   that  $\beta + \gamma \geqslant 1$. If $ \beta+\gamma=2$, then we get $ k\leqslant 6$ which contradicts $k\geqslant  t+1\geqslant7$.  Therefore, $ \beta+\gamma=1$ and  so    we find from  \eqref{nega}  that $ k=t+1 $. This   forces  that   $ \beta=1$ and $ \gamma  =0$.

The  discussion done above indicates  that  the second term  in   \eqref{e(G)} is equal to $0$, implying   that $ G_{\mathrm{b}}= G_{\mathrm{c}} $. We know that there is no red edge in $ C$, meaning that $ G[C]=G_{\mathrm{c}}[C]$. From this and since  $ G_0$ is a subgraph of $ G$ with $ \delta(G_0)\geqslant 2$, we deduce  that  $ V(G_0)\cap C\subseteq C_1$. Further, as   $ G_0$ is a subgraph of $ G$  and  $ G_{\mathrm{b}}= G_{\mathrm{c}} $, we yield   for any  vertex $ c\in V(G_0)\cap C$ that  there is an index $i\in\{1, \ldots, m\}$ such  that $ N_{G_0}(c)\cap A\subseteq N_{G}(c)\cap A = A_i$.

We are  now  ready to describe  the structure of $G_0$.
If  $m=1$, then    $ G_0$ is obviously  a subgraph of   the graph  $\mathbbmsl{G}=K_{s}\vee \overline{K_{n_0-s}}$ for some $ s\leqslant t-1$. But, this is  a contradiction, since  $\mathbbmsl{G}$ and therefore $ G_0$ are not weakly  $ K_{2,t} $-saturated.
So,  suppose that $ m=2 $. As (T.1)  has happened and   $k=t+1$, we may assume without  loss of generality that
$|A_1|=2$ and $|A_2|=t-1$.
It follows from $ \beta =1$   that $ A_1$ is   an  independent set.
Now, it is easily seen  that  $G_0$ is a subgraph of the graph $\mathbbmsl{H}$, depicted in Figure \ref{figH}, with  $X\subseteq A_2$, $ Y_1 \subseteq A_1$, and $ |V(\mathbbmsl{H})|=n_0$. Since $ |X|\leqslant t-1$ and $ n_0\leqslant 2t-2$,  Lemma \ref{XYZ} implies that the described graph  $\mathbbmsl{H} $ and therefore $ G_0$ are not weakly $ K_{2,t}$-saturated, a contradiction.

\begin{case}\label{evenT2}
(T.2) has happened.
\end{case}

Since  $k=t$, it follows from  \eqref{e(G)} that
$n-2+\binom{t}{2}=e( G_{\mathrm{b}})\geqslant e( G_{\mathrm{c}})+|F|\geqslant n-2+\binom{t}{2}-(m+\beta +\gamma-2)+|F|$ and therefore  $ |F|\leqslant m+\beta+\gamma-2\leqslant 2m-2$.
As we proved in Case  \ref{Case T2h},     either $A\subseteq N_{G}(u, v)$ or $N_{G}(u, v)\cap A=A\setminus A_i$   for some  $ i\in\{1, \ldots, m\} $ with $ |A_i|=2$.

Assume that  $A\subseteq N_{G}(u, v)$.  So,    there are $2m$ black  edges between  $\{u, v\}$ and  $\{x_{s_1}, \ldots,  x_{s_m}\}$ by (P.5)  which only two of them belong to $ E$. This  yields
that      $ |F|\geqslant 2m-2 $ and  thus     $ |F|=2m-2$ and      $\beta+\gamma=m$.

Assume that     $N_{G}(u, v)\cap A=A\setminus A_i$   for some  $ i\in\{1, \ldots, m\} $ with $ |A_i|=2$. We have
$|N_{G}(u, v)\setminus A|=|N_{G}(u,v)|-|N_{G}(u, v)\cap A|\geqslant t-1-|A\setminus A_i|=1$.
Consider an arbitrary vertex  $w\in N_{G}(u, v)\setminus A$. As at most one of the edges $ uw$ and $ vw$ belongs to $ E$, we may assume  without loss of generality  that  $uw\in F$.  Clearly, $ w\in C_1\cup C_2$. There are $2m$ black  edges between  $\{u, v\}$ and  $(\{x_{s_1}, \ldots,  x_{s_m}\}\setminus\{x_{s_i}\})\cup\{w\}$ which at most three  of them belong to $ E$.
Therefore,     $ |F|$ is equal to  either  $2m-3$ or $2m-2$. Since $ |F|\leqslant m+\beta+\gamma-2\leqslant 2m-2$, we find that $ m-1\leqslant\beta+\gamma\leqslant m $.

Let $R=F\setminus E_G(\{u,v\},\{x_{s_1}, \ldots,  x_{s_m}\})$. According to  what we  saw   above,   $|R|\leqslant 2$.
Set  $ A'=A\cup \{u,v\}$,  $ C'=C\setminus \{u,v\}$,  $ A_j'=A_j$ for  $ j=1,\ldots, m$,  and $ A_{m+1}'=\{u, v\} $.
We distinguish the following  six cases.
\begin{itemize}[label=*****, wide=0pt, leftmargin=*]
\item[(I.1)] $A\subseteq N_G(u,v)$, $ |F|=2m-2$, and $R=\varnothing$. In this case,  $ A_1', \ldots, A_{m+1}'$ are independent sets.
\item[(I.2)] $N_G(u, v)\cap A=A\setminus A_i$, $ A_i $ is an independent set,  $ |F|=2m-3$, and $R=\{uw\}$. In this case,    $ A_1',  \ldots,  A_{m+1}'$ are independent
sets except possibly  for  $A_j'$, where   $j\in\{1,  \ldots, m\}\setminus\{i\}$.
\item[(I.3)] $N_G(u, v)\cap A=A\setminus A_i$, $ A_i$ is a clique, $ |F|=2m-3$, and $R=\{uw\}$. In this case, $ A_i'$ is  a clique and $ A_1', \ldots, A_{i-1}', A_{i+1}', \ldots,  A_{m+1}'$ are independent sets.
\item[(I.4)] $N_G(u, v)\cap A=A\setminus A_i$, $ |F|=2m-2$, and $R=\{uv, uw\}$. In this case, $ A_1', \ldots, A_m'$ are independent sets and $ A_{m+1}'$ is a clique.
\item[(I.5)] $N_G(u, v)\cap A=A\setminus A_i$, $ |F|=2m-2$, and $R=\{ab,uw\}$ for some $a\in A'$ and $b\in C'$. In this case, $ A_1', \ldots, A_{m+1}'$ are independent sets.
\item[(I.6)] $N_G(u, v)\cap A=A\setminus A_i$, $ |F|=2m-2$, and $R=\{b_1b_2,uw\}$ for some $b_1,b_2\in  C'$. In this case, $ A_1', \ldots, A_{m+1}'$ are independent sets.
\end{itemize}

We define a supergraph $G'$ of $ G$ as follows.
Denote by  $G'$  the graph obtained from $G$ by joining $u$ to all vertices in $N_G(v)\setminus N_G[u]$ and joining  $v$ to all vertices in $N_G(u)\setminus N_G[v]$.
For any  $j\geqslant 0$, let  $C_j'$     be    the set of vertices in $ C' $ with    the   distance $j$  from $A'$  in $  G'$ and
let  $\{C_1', \ldots, C_{d'}'\}$ be  a partition of $ C'$.
In   the cases (I.1)--(I.5), we have
$E(G'[C'])= E( G_{\mathrm{c}}[C']) $   and   therefore  we observe  in       $ G'$ that  $ C_1', \ldots, C_{d'}'$   are independent sets   and moreover,    every vertex in $ C_j'$ has exactly one neighbor in $ C_{j-1}'$   for    $ j=2,\ldots, d'$.
In the case (I.6), we have
$E(G'[C'])= E( G_{\mathrm{c}}[C'])\cup \{b_1b_2\} $    and therefore  we observe  in      $ G'$ that      $ C_1', \ldots, C_{d'}'$, all  except  probably for one,   are independent sets   and moreover,    every vertex in $ C_j'$ has exactly one neighbor in $ C_{j-1}'$  for all     $ j\in\{2,\ldots, d'\}$  except  probably for   one.
Further,  for every vertex $c\in C_1'\setminus \{b\} $, there is an  index  $j\in\{1, \ldots, m+1\}$ such that  $ N_{G'}(c)\cap A'=A_j'$.

First,  we  consider   the cases (I.1), (I.3), (I.4), and (I.6).
In view of   the  structure of $G'$ described above,  the following statements  are  easily   obtained   for  two    arbitrary distinct  vertices $y, z \in V(G')$.
\begin{itemize}
\item[(i)] Let $y\in C'$ and $ z\in A'\cup C'$. Then,  $N_{G'}(y, z)$ is
one of $\varnothing$, $\{c\}$, $\{c_1, c_2\} $, $A_j'$,  or $A_j'\cup\{c\}$  for some vertices $ c, c_1, c_2 \in C'$ and index $j\in \{1,\ldots, m+1\}$. Hence,   $|N_{G'}(y, z)|\leqslant  4$.
\item[(ii)] Let $y,z\in A'$. Assume that $y\in A_{j}'$ and $ z\in A_{\ell}'$ for
some $ j\neq \ell$. In the cases (I.1) and (I.6), we have 	$N_{G'}(y, z)\subseteq A'\setminus (A_{j}'\cup A_{\ell}')$. In the case (I.3), we have $N_{G'}(y, z)\subseteq A'\setminus (A_{j}'\cup A_{m+1}')$ if $ \ell=i$ and
$N_{G'}(y, z)\subseteq A'\setminus (A_{j}'\cup A_{\ell}')$ if $ i\notin\{j, \ell\}$.
In the case (I.4), we have $N_{G'}(y, z)\subseteq A'\setminus (A_{j}'\cup A_{i}')$ if $ \ell=m+1$ and
$N_{G'}(y, z)\subseteq A'\setminus (A_{j}'\cup A_{\ell}')$ if $ m+1\notin\{j, \ell\}$.
Thus, in any   case, $|N_{G'}(y, z)|\leqslant t-2$.
\end{itemize}
As $G'$ is not a complete graph and  there is no pair
$\{ y, z\} $  of vertices of $G'$  such that  $ |N_{G'}(y, z)|\geqslant t-1 $ and $ N_{G'}(y)\setminus\{z\}\neq N_{G'}(z)\setminus\{y\}$, one  finds  that $ G'$ and therefore $G$  are  not weakly $ K_{2,t}$-saturated,  a contradiction.

Next,  we    consider the cases (I.2) and (I.5).
Since   $ G_0$ is   a subgraph of $G'$ with  $ \delta(G_0)\geqslant 2$, we conclude that  $ V(G_0)\cap C'\subseteq C_1'$.
In  the case  (I.2), set   $\widetilde{A}=A'$  and in    the case (I.5),  set   $\widetilde{A}=A'\cup\{b\}$.
Now, it is  easily  seen  that  $G_0$ is a spanning subgraph of the graph $\mathbbmsl{H}$,  depicted in Figure \ref{figH},  with $ X\subseteq\widetilde{A}\setminus (A_i'\cup A_{m+1}')$ and $ Y_1\subseteq A_i'\cup A_{m+1}'$.
As $ |X|\leqslant t-1$ and  $ n_0\leqslant 2t-2$,  Lemma \ref{XYZ} implies that the described graph  $\mathbbmsl{H} $ and therefore $ G_0$ are not weakly $ K_{2,t}$-saturated, a contradiction.

\begin{case}\label{evenT3}
(T.3) has happened.
\end{case}

Since  $k=t+1$, it follows from  \eqref{e(G)} that
$n-2+\binom{t}{2}=e( G_{\mathrm{b}})\geqslant e( G_{\mathrm{c}})+|F|\geqslant n-2+\binom{t}{2}-(\beta +\gamma-1)+|F|$. Therefore, $ |F|\leqslant \beta+\gamma-1\leqslant m-1$. As we proved in  Case  \ref{Case T3h},
either $A\subseteq N_{G}(u, v)$ or $N_{G}(u, v)\cap A=A\setminus A_i$   for some  $ i\in\{1, \ldots, m\} $ with $ |A_i|\in\{2, 3\}$.

Suppose  that $A\subseteq N_{G}(u, v)$.  Then,  there are $2m$ black  edges between  $\{u, v\}$ and  $\{x_{s_1}, \ldots,  x_{s_m}\}$ by (P.5)  which only two of them belong to $ E$. This
yields    that   $ |F|\geqslant 2m-2 $  which along with    $ |F|\leqslant  m-1$    gives   $m\leqslant1$, a contradiction.

So, we may assume that   $N_{G}(u, v)\cap A=A\setminus A_i$   for some  $ i $ with $ |A_i|\in\{2, 3\}$.
Then,  there are $2m-2$ black  edges between  $\{u, v\}$ and  $\{x_{s_1}, \ldots,  x_{s_m}\}\setminus\{x_{s_i}\}$ which only two   of them belong to $ E$.
This gives      $ |F|\geqslant 2m-4 $ which along with $ |F|\leqslant \beta+\gamma-1\leqslant m-1$  leads  to    $2m-3\leqslant\beta+\gamma\leqslant m$.  Hence, either $1\leqslant\beta+\gamma\leqslant m =2$ or $m=\beta+\gamma=3$.

If $m=\beta+\gamma=2 $, then $ t+1=|A_1|+|A_2|\leqslant 6$ which contradicts $ t\geqslant 6$.
So, assume that either $m=2$ and $\beta+\gamma=1$ or $m=\beta+\gamma=3$. In both cases, $ |F|=2m-4$ and so
$ F$ is contained in the set of the black  edges between  $\{u, v\}$ and  $\{x_{s_1}, \ldots,  x_{s_m}\}\setminus\{x_{s_i}\}$. Therefore, $G[C]= G_{\mathrm{b}}[C]= G_{\mathrm{c}}[C] $. From this and since  $ G_0$ is  a subgraph of $ G$ with  $ \delta(G_0)\geqslant 2$,  we conclude that  $ V(G_0)\cap C\subseteq C_1$.
As each  edge in $ F $ is incident  to either $ u$ or $ v$, we observe in $G_{\mathrm{b}}$ that  every vertex in  $ C\setminus \{u,v\}$ has exactly one neighbor in $ A$. Therefore, for every vertex $ c\in V(G_0)\cap C\setminus\{u,v\}$, we have $ N_{G_0}(c)\cap A\subseteq N_{G}(c)\cap A = A_j$ for some $j\in\{1, \ldots, m\}$.
It results in that  $G_0$ is a
subgraph of the graph $\mathbbmsl{H}$,  depicted in Figure \ref{figH}, with $ X\subseteq A\setminus A_i$, $ Y_1\subseteq A_i\cup \{u,v\}$,  and $ |V(\mathbbmsl{H})|=n_0$.
Since $ |X|\leqslant t-1$ and $ n_0\leqslant 2t-2$,  Lemma \ref{XYZ} implies that the described graph  $\mathbbmsl{H} $ and therefore $ G_0$ are not weakly $ K_{2,t}$-saturated, a contradiction.

\begin{case}\label{evenT4}
(T.4) has happened.
\end{case}

Since  $k=t+1$, it follows from  \eqref{e(G)} that
$n-2+\binom{t}{2}=e( G_{\mathrm{b}})\geqslant e( G_{\mathrm{c}})+|F|\geqslant n-2+\binom{t}{2}-(\beta +\gamma-1)+|F|$. Therefore, $ |F|\leqslant \beta+\gamma-1\leqslant m-1$.
As we proved in Case   \ref{Case T4h},   $A\setminus A_i \subseteq N_{G}(u, v)$ by assuming    $ u=x_{s_i}$. So,  there are $m-1$ black  edges between  $v$ and  $\{x_{s_1}, \ldots,  x_{s_m}\}\setminus\{x_{s_i}\}$ which only one  of them belongs to $ E$. Hence,   $|F|\geqslant m-2$ which derives that  $  |F|$ equals either  $ m-2$ or $ m-1$.

We show that $ N_G(u,v)\cap A_i=\varnothing$. Suppose otherwise. This forces that  $ A_i$ is a clique and $ uv\in E(G)$. The latter implies that $ |F|=m-1$ and so $ \beta +\gamma=m$. In particular, $ A_i$ is an independent set, a contradiction.

If $N_G(u, v)=A\setminus A_i $, then $ t-1\leqslant |N_G(u,v)|=t+1-|A_i|$ and so $ |A_i|=2$.

If   $N_{G}(u, v)\neq A\setminus A_i$, then there is a vertex $w\in N_G(u, v)\cap C_1$. Since $ v,w\in C_1$, we have $vw\in F$. As $ F$ also  contains $ m-2$ edge between $v$ and  $\{x_{s_1}, \ldots,  x_{s_m}\}\setminus\{x_{s_i}\}$, we conclude that $ |F|=m-1$ and so $ \beta+\gamma=m$. Note that  $ |F|=m-1$ forces that $N_{G}(u, v)= (A\setminus A_i)\cup \{w\} $.

Let $R=F\setminus E_G(\{u,v\},\{x_{s_1}, \ldots,  x_{s_m}\})$. According to  what we  saw   above,    $|R|\leqslant 1$.
Set  $ A'=A\cup \{v\}$,  $ C'=C\setminus \{v\}$,  $ A_i'=A_i\cup\{v\}$,  and $ A_j'=A_j$ for any $ j\in\{1, \ldots, m\}\setminus\{ i\}$.
We distinguish  the following five  cases.
\begin{itemize}[label=*****, wide=0pt, leftmargin=*]
\item[(J.1)] $N_G(u, v)=A\setminus A_i $, $ |F|=m-2$, and $R=\varnothing$.  In this case, $ |A_i|=2$ and all of $ A_1', \ldots, A_m'$ except possibly for one  are independent sets.
\item[(J.2)] $N_G(u, v)=A\setminus A_i $, $ |F|=m-1$, and $R=\{uv\}$. In this case, $ A_i $ is an independent set  of size $2$ and   $  A_1', \ldots, A_{i-1}',   A_{i+1}', \ldots, A_m'$ are independent sets.
\item[(J.3)] $N_G(u, v)=A\setminus A_i $, $ |F|=m-1$, and $R=\{ab\}$ for some $a\in A'$ and $b\in C'$. In this case, $|A_i|=2 $ and $A_1', \ldots, A_m'$ are independent sets.
We divide this case to the following three subcases.
\begin{itemize}[label=*******, wide=0pt, leftmargin=*]
\item[(J.3.1)] There is $ A_{\ell}'$ such that $ |A_{\ell}'|=3$ and $ N_G(b)\cap A_{\ell}'=\varnothing$.
\item[(J.3.2)] Among  $ A_1', \ldots, A_m'$, there are exactly two sets $ A_{j_1}'$ and $ A_{j_2}'$  which  meet  $N_{G}(b)$. In addition, $ t\geqslant 8$ and  $ |A_{j_1}'|=|A_{j_2}'|=3$.
\item[(J.3.3)] $t=6$, $ m=3$, $ |A_1'|=|A_2'|=3$, and $ |A_3'|=2$. The vertex $ b$ has neighbors in both $ A_1', A_2'$ and no neighbor in $ A_3'$.
\end{itemize}
\item[(J.4)] $N_G(u, v)=A\setminus A_i $,  $ |F|=m-1 $, and $R=\{b_1b_2\}$ for some  $b_1,b_2\in C'$. In this case,  $ A_1', \ldots, A_m'$ are independent sets.
\item[(J.5)] $N_{G}(u, v)= (A\setminus A_i)\cup \{w\} $,  $ |F|=m-1 $, and $R=\{vw\}$. In this case,  $ A_1', \ldots, A_m'$ are independent sets.
\end{itemize}

We define a supergraph $G'$ of $ G$ as follows.
Denote by   $G'$   the graph obtained from $G$ by joining  $ c$ to all vertices in $N_G(v)\setminus N_G[c]$ and joining  $v$ to all vertices in $N_G(c)\setminus N_G[v] $ for every vertex $ c\in A_i$.
Note that, if either $ uv\in E(G)$ or $ A_i $ is a clique in $ G $, then $ A_i'$ is a clique in $ G'$.
For any  $j\geqslant 0$, let  $C_j'$     be    the set of vertices in $ C' $ with    the   distance $j$  from $A'$  in $  G'$ and
let  $\{C_1', \ldots, C_{d'}'\}$ be  a partition of $ C'$.
In   the cases (J.1)--(J.3) and (J.5), we have
$E(G'[C'])= E( G_{\mathrm{c}}[C']) $   and   therefore  we observe  in       $ G'$ that  $ C_1', \ldots, C_{d'}'$   are independent sets   and moreover,    every vertex in $ C_j'$ has exactly one neighbor in $ C_{j-1}'$   for    $ j=2,\ldots, d'$.
In the case (J.4), we have
$E(G'[C'])= E( G_{\mathrm{c}}[C'])\cup \{b_1b_2\} $    and therefore  we observe  in      $ G'$ that      $ C_1', \ldots, C_{d'}'$,  all  except  probably for one,     are independent sets   and moreover,    every vertex in $ C_j'$ has exactly one neighbor in $ C_{j-1}'$  for all   $ j\in\{2,\ldots, d'\}$ except  probably for   one.
Further, for every vertex $c\in C_1'\setminus \{b\} $, there is an index  $j\in\{1, \ldots, m\}$ such that  $ N_{G'}(c)\cap A'=A_j'$.

First, we consider  the cases (J.1), (J.2), and (J.3.1).
We claim  that   there exists an independent set $ A_{\ell}'$ of size $ 3 $.
There is nothing to prove in the case  (J.3.1).
In  the cases (J.1) and  (J.2), if $ A_i'$ is an  independent set, then we let   $ \ell=i$. Otherwise, since $ t-1=|A_1'|+ \cdots +|A_{i-1}'|+ |A_{i+1}'|+ \cdots +|A_m'|$ is odd and  $  A_1', \ldots, A_{i-1}', A_{i+1}', \ldots, A_m'$ are independent sets of sizes  $2 $ or $ 3$, we find  an index    $ \ell\in\{1, \ldots, m\}\setminus\{ i\}$ such that $ A_{\ell}'$ is an independent set of size $ 3 $, as we claimed.
Since  $ G_0$ is a subgraph of $ G'$ with  $ \delta(G_0)\geqslant 2$,    we should have   $ V(G_0)\cap C'\subseteq C_1'$.
Now, it is straightforwardly   seen  that  $G_0$ is a spanning subgraph of the graph $\mathbbmsl{H}$,  depicted in Figure \ref{figH},  with $ X\subseteq A'\setminus A_{\ell}'$ and $ Y_1\subseteq A_{\ell}'$.
As $|X|\leqslant t-1$ and  $ n_0\leqslant 2t-2$,  Lemma \ref{XYZ} implies that the described graph  $\mathbbmsl{H} $ and therefore $ G_0$ are not weakly $ K_{2,t}$-saturated, a contradiction.

Next, we consider  the cases (J.3.2), (J.4),  and (J.5).
In view of   the  structure of $G'$ described above,  the following statements  are  easily   obtained   for  two    arbitrary distinct  vertices $y, z \in V(G')$.
\begin{itemize}
\item[(i)] Let $y\in C'$ and $ z\in A'\cup C'$.
Then,  $N_{G'}(y, z)$ is  one of $\varnothing$, $\{c\}$, $\{c_1, c_2\} $, $A_{j}'$, $A_j'\cup\{c\}$, $ A_{j_1}' \cup A_{j_2}'$, where $ c, c_1, c_2 \in C'$, $j\in \{1,\ldots, m\}$, and $ j_1, j_2$ are given in the case (J.3.2). This shows that  $|N_{G'}(y, z)|\leqslant  \max\{4, |A_{j_1}'\cup A_{j_2}'| \}$.
As $ t\geqslant 8 $ in the case (J.3.2)  and $ t\geqslant 6$ in the cases (J.4) and (J.5), one  deduces   that  $|N_{G'}(y, z)|\leqslant  t-2$.
\item[(ii)] Let $y,z\in A'$. Assume that $y\in A_{\ell_1}'$ and $ z\in A_{\ell_2}'$ for some $ \ell_1\neq \ell_2$. Then, $N_{G'}(y, z)$ is
either $A'\setminus (A_{\ell_1}'\cup A_{\ell_2}')  $ or $ (A'\cup \{b\})\setminus (A_{j_1}'\cup A_{j_2}') $. Note that the latter one occurs in the case   (J.3.2) whenever  $ \ell_1=j_1$ and $ \ell_2=j_2$. So, in any case, $|N_{G'}(y, z)|\leqslant t-2$.
\end{itemize}
As $G'$ is not a complete graph and  there is no pair $\{ y, z\}$    of vertices of $G'$
such that  $ |N_{G'}(y, z)|\geqslant t-1 $ and $ N_{G'}(y)\setminus\{z\}\neq N_{G'}(z)\setminus\{y\}$, we deduce  that $ G'$ and therefore $G$ are  not weakly $ K_{2,t}$-saturated, a contradiction.

Finally, we consider the case  (J.3.3). Let     $ p\in A_3'$. It follows from $A_1'\cup A_2'\subseteq N_{G'}(b, p)$ that   $ |N_{G'}(b, p)|\geqslant t-1$.
We define a supergraph $ G''$ of $ G'$ as follows.
Denote by  $G''$    the graph obtained from $G'$ by joining both  vertices in $A_3'$  to all vertices in $N_G(b)\setminus N_G(p)$ and joining  $b$ to all vertices in $N_G(p)\setminus N_G(b) $.
Set  $ A''=A'\cup \{b\}$,  $ C''=C'\setminus \{b\}$,  $ A_1''=A_1'$,  $ A_2''=A_2'$, and  $ A_3''=A_3'\cup \{b\}$.
For any  $j\geqslant 0$, let  $C_j''$     be    the set of vertices in $ C'' $ with    the   distance $j$  from $A''$  in $  G''$ and
let  $\{C_1'', \ldots, C_{d''}''\}$ be  a partition of $ C''$.
As
$ E(G''[C''])= E( G_{\mathrm{c}}[C'']) $,   we observe  in       $ G''$ that  $ C_1'', \ldots, C_{d''}''$   are independent sets   and moreover,    every vertex in $ C_j''$ has exactly one neighbor in $ C_{j-1}''$   for    $ j=2,\ldots, d''$.
Further, for every vertex $c\in C_1'' $, there is an index  $j\in\{1, 2, 3\}$ such that $ N_{G''}(c)\cap A''=A_j''$.
Using  these features,    the following statements  are  easily   obtained   for  two    arbitrary distinct  vertices $y, z \in V(G'')$.
\begin{itemize}
\item[(i)] Let $y\in C''$ and $ z\in A''\cup C''$.
Then,  $N_{G''}(y, z)$ is  one of $\varnothing$, $\{c\}$, or $A_{j}''$ for some vertex  $ c \in C''$ and index  $j\in \{1, 2, 3\}$.
Hence,  $|N_{G''}(y, z)|\leqslant  3$.
\item[(ii)] Let $y,z\in A''$. Assume without loss of generality  that $y\in A_{1}''$ and $ z\in A_{2}''$. So, $N_{G''}(y, z)= A_3''$ and thus  $|N_{G''}(y, z)|= 3$.
\end{itemize}
As  $t=6$,  there exists  no pair $\{ y, z\}$    of vertices of $G''$
such that  $ |N_{G''}(y, z)|\geqslant t-1 $ and $ N_{G''}(y)\setminus\{z\}\neq N_{G''}(z)\setminus\{y\}$. But,   $G''$ is not  a  complete graph, so    $ G''$ and therefore $G$ are  not weakly $ K_{2,t}$-saturated, a contradiction.

The proof   is   completed here.
\end{proof}

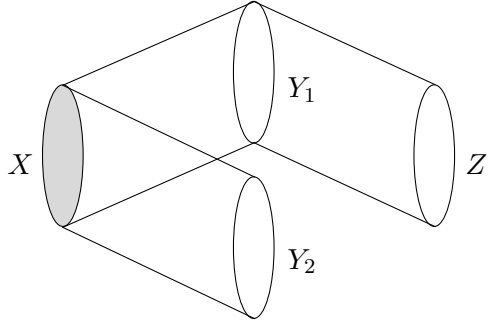
\begin{figure}[H]
\begin{center}
\begin{tikzpicture}[line cap=round,line join=round,>=triangle 45,x=1.0cm,y=1.0cm,scale=0.6]
\draw [rotate around={90:(-2.85,-8.56)},line width=0.4pt,color=black,fill=gray!30] (-2.85,-8.56) ellipse (1.57cm and 0.45cm);
\draw [rotate around={90:(1.38,-10.6)},line width=0.4pt,color=black] (1.38,-10.6) ellipse (1.57cm and 0.45cm);
\draw [rotate around={90:(1.38,-6.71)},line width=0.4pt,color=black] (1.38,-6.71) ellipse (1.57cm and 0.45cm);
\draw [rotate around={90:(5.38,-8.66)},line width=0.4pt,color=black] (5.48,-8.66) ellipse (1.57cm and 0.45cm);
\draw (-2.85,-7)-- (1.4,-5.14);
\draw (-2.85,-10.15)-- (1.4,-8.27);
\draw (-2.85,-7)-- (1.4,-9.035);
\draw (-2.85,-10.15)-- (1.4,-12.175);
\draw (1.38,-5.14)-- (5.4,-6.987);
\draw (1.38,-8.27)-- (5.4,-10.135);
\draw (-4.3,-8.3) node[anchor=north west] {$X$};
\draw (1.9,-6.6) node[anchor=north west] {$Y_1$};
\draw (5.85,-8.3) node[anchor=north west] {$Z$};
\draw (1.9,-10.4) node[anchor=north west] {$Y_2$};
\end{tikzpicture}
\caption{The graph  $\mathbbmsl{H}$. The set  $ X$ is a clique and the sets  $ Y_1, Y_2, Z$ are independent.
Every vertex in $ X$ is adjacent to every vertex in $ Y_1\cup Y_2$ and every vertex in $ Z$ is adjacent  to every vertex in $ Y_1$.}\label{figH}
\end{center}
\end{figure}

We end the paper  here by pointing out that
Theorem \ref{main.thm}  is concluded from     Lemmas  \ref{lowerr}, \ref{generallower},  and \ref{event}.


\begin{thebibliography}{99}


\bibitem{Alon}    N. Alon, An extremal problem for sets with applications to graph theory,   J. Combin. Theory Ser. A      40  (1985)  82--89.

\bibitem{Balogh} J. Balogh, B. Bollob\'as,  R. Morris, Graph bootstrap percolation, Random Structures Algorithms  41 (2011) 413--440.

\bibitem{Bella}    B. Bollob\'{a}s,   Weakly  $k$-saturated graphs, Beitr\"age zur Graphentheorie   (Kolloquium, Manebach, 1967), Teubner, Leipzig, 1968, pp. 25--31.

\bibitem{Borowiecki} M. Borowiecki,  E. Sidorowicz, Weakly $\mathcal{P}$-saturated graphs, Discuss. Math. Graph Theory 22 (2002) 17--30.

\bibitem{cui} Y. Cui, L.  Pu,   Weak saturation numbers of  $K_{2, t}$  and $K_p\bigcup K_q$,  AKCE Int. J. Graphs Comb. 16 (2019)  237--240.

\bibitem{Faud.3}  B.L. Currie,  J.R. Faudree, R.J. Faudree, J.R. Schmitt, A survey of minimum saturated graphs, Electron. J. Combin.     (2021) \#DS19.

\bibitem{Faudree} R.J. Faudree, R.J. Gould, M.S. Jacobson, Weak saturation numbers for sparse graphs, Discuss. Math. Graph Theory 33 (2013) 677--693.

\bibitem{Frankl}   P. Frankl, An extremal problem for two families of sets,   European J. Combin.  3  (1982)  125--127.

\bibitem{Kalai2}    G. Kalai, Hyperconnectivity of graphs,    Graphs Combin.     1  (1985)  65--79.

\bibitem{Kalai1}    G. Kalai, Weakly saturated graphs are rigid,  in: Convexity and graph theory (Jerusalem, 1981), North-Holland Math. Stud., 87, Ann. Discrete Math., 20, North-Holland, Amsterdam, 1984, pp.  189--190.

\bibitem{Kalinichenko} O. Kalinichenko, M. Zhukovskii, Weak saturation stability, European J. Combin. 114 (2023) 103777.

\bibitem{Kronenberg} G. Kronenberg, T.  Martins, N.  Morrison,   Weak saturation numbers of complete bipartite graphs in the clique,  J. Combin. Theory Ser. A 178 (2021)  105357.

\bibitem{Lovasz}   L. Lov\'{a}sz, Flats in matroids and geometric graphs, in:  Combinatorial surveys (Proc. Sixth British Combinatorial Conf., Royal Holloway Coll., Egham, 1977),  Academic Press, London, 1977, pp. 45--86.

\bibitem{Yu}    J.  Yu, An extremal problem for sets: a new approach via Bezoutians,   J. Combin. Theory Ser. A     62  (1993)  170--175.


\end{thebibliography}
\end{document}